\newcommand{\Z}{\mathbb Z}
\newcommand{\Aut}{\mathop{\rm Aut}\nolimits}
\newcommand{\Aff}{\mathop{\rm Aff}\nolimits}
\newcommand{\SHAut}{\mathop{\rm SHAut}\nolimits}
\newcommand{\Sym}{\mathop{\rm Sym}\nolimits}
\newcommand{\bb}{\mathbf b}
\newcommand{\ba}{\mathbf a}
\newcommand{\bx}{\mathbf x}
\newcommand{\be}{\mathbf e}
\newcommand{\A}{\mathcal A}
\newcommand{\G}{\mathcal G}
\newcommand{\LL}{\mathcal L}
\newcommand{\one}{\mathds 1}
\newcommand{\zero}{\mathds O}
\newtheorem{theorem}{Theorem}[section]
\newtheorem{corollary}[theorem]{Corollary}
\newtheorem{proposition}[theorem]{Proposition}
\newtheorem{lemma}[theorem]{Lemma}
\theoremstyle{definition}
\newtheorem{definition}{Definition}
\newtheorem{remark}[theorem]{Remark}
\newtheorem*{theoremmain}{Theorem~\ref{thm:main}}
\newtheorem*{theoremaffine}{Theorem~\ref{thm:affine}}
\title{The lamplighter group of rank two generated by a bireversible automaton}
\author[1]{Elsayed Ahmed\footnote{Email: \href{mailto:eahmed1@mail.usf.edu}{eahmed1@mail.usf.edu}}}
\author[1]{Dmytro Savchuk\footnote{Email: \href{mailto:savchuk@usf.edu}{savchuk@usf.edu}; Partially supported by the Simons Collaboration Grant \#317198 from Simons Foundation}}
\affil[1]{Department of Mathematics and Statistics\\
               University of South Florida\\
               4202 E Fowler Ave\\
               Tampa, FL 33620-5700}
\begin{document}
\maketitle

\begin{abstract}
We construct a 4-state 2-letter bireversible automaton generating the lamplighter group $(\Z_2^2)\wr\Z$ of rank two. The action of the generators on the boundary of the tree can be induced by the affine transformations on the ring $\Z_2[[t]]$ of formal power series over $\Z_2$.\end{abstract}

\section{Introduction}
\label{sec:int}

The lamplighter group and its generalizations have been studied extensively during the last several decades. The simplest lamplighter group $\LL=\LL_{1,2}$ is defined as the restricted wreath product $\Z_2\wr\Z$ or, equivalently, as $\oplus_{\Z}\Z_2\rtimes\Z$, where $\Z$ acts on $\oplus_{\Z}\Z_2$ by shifting the index.
More generally, higher rank lamplighter groups $\LL_{n,d}$ are defined similarly as $(\Z_d^n)\wr\Z$.

All of the groups in this class are metabelian groups of exponential growth that, despite seemingly simple algebraic structure, possess extraordinary properties, connect different areas of mathematics, and serve as counterexamples to several conjectures. To start with, this is one of the simplest examples of finitely generated but not finitely presented groups.

Especially fruitful from historical perspective was the discovery that the lamplighter group (of rank 1) $\LL$ is one of the 6 nonisomorphic groups generated by 2-state 2-letter invertible Mealy automata~\cite{gns00:automata}. Every group generated by a $d$-letter automaton naturally acts on the regular $d$-ary tree, whose set of vertices can be identified with the set $X^*$ of all finite words over a $d$-letter alphabet $X=\{0,1,\ldots,d-1\}$. The automaton realization of $\LL$ has eventually led to the construction of counterexamples to various versions of Atiyah conjecture on $l^2$-Betti numbers~\cite{atiyah:elliptic} in~\cite{grigorch_lsz:atiyah,dicks_s:spectral_measure02,austin:irrational_dimension13,lehner_w:lamplighter_atiyah13,grabowski:atiyah_problem14} and more recently~\cite{grabowski:irrational_l2_lamplighter16}. The striking property of $\LL$ behind the first construction in~\cite{grigorch_lsz:atiyah} is that the spectrum of the Laplace operator on the Cayley graph of $\LL$ (with respect to a special system of generators) is pure
point spectrum~\cite{grigorch_z:lamplighter}, which was obtained via the action of $\LL$ on the boundary $X^\infty$ of the rooted binary tree $X^*$ induced by its automaton realization.

The lamplighter type groups play also an important role in the theory of random walks on groups~\cite{kaimanovich_v:random_walks83,lyons_pp:random_walks_lamplighter96,pittet_s:random_walks02,brieussel_tz:random_walks_affine_group}.
The subgroup structure of $\LL_{n,p}$ was explicitly described by Grigorchuk and Kravchenko in~\cite{grigorchuk_k:subgroup_structure14}. Further, the lamplighter groups happen to be one of the first examples of scale-invariant groups that are not virtually nilpotent~\cite{nekrashevych_p:scale_invariant}, thus giving a counterexample to a conjecture by Benjamini. Here by scale-invariant group we mean finitely generated infinite group $G$ that possess a nested sequence of finite index subgroups $G_n$ that are all isomorphic to $G$ and whose intersection is a finite group. And again, the automaton realization of lamplighter groups played an important role in this construction. In particular, it is proved in~\cite{grigorch_s:essfree} that each self-replicating automaton group acting essentially freely on the boundary of a tree (i.e., in such a way that for every nontrivial element of the group the measure of the fixed point set of this element under the action of the group on the boundary of the rooted tree is zero) is scale-invariant. Many lamplighter type groups happen to act essentially freely on the boundary.

Below we survey some of a history of generating lamplighter type groups by automata. But before proceeding, we first informally recall the classes of reversible and bireversible automata (the formal treatment is given in Section~\ref{sec:pre}). For every invertible $n$-state $m$-letter automaton $\A$ one can define an $m$-state $n$-letter \emph{dual} automaton $\partial\A$ by ``swapping the roles'' of letters of the alphabet and the states of an automaton. I.e., there is a transition $q\stackrel{x/y}{\longrightarrow}w$ in $\A$ for states $q,w$ of $\A$ and letters $x,y$ from the alphabet if and only if there is a transition $x\stackrel{q/w}{\longrightarrow}y$ in $\partial\A$. An automaton $\A$ is called \emph{reversible} if its dual $\partial\A$ is invertible, and it is called \emph{bireversible} if $\A$, $\partial A$ and $\partial(A^{-1})$ are all invertible. Bireversible automata constitute a very interesting class. The groups that they generate are often hard to analyze by the standard methods based on various contraction properties. For example, several families of bireversible automata studied in~\cite{gl_mo:compl,vorobets:aleshin,vorobets:series_free,steinberg_vv:series_free,savchuk_v:free_prods} generate free nonabelian groups or free products of finite number of copies of $\Z_2$. But the proof behind the main break through construction in~\cite{vorobets:aleshin} that answered 20 year old question by Sidki is rather technical and involved.

The main result of this paper is the following theorem.

\begin{theoremmain}
The group $\G=\langle a=(b,d)\sigma,b=(d,b)\sigma,c=(a,c),d=(c,a)\rangle$ is generated by a 4-state 2-letter bireversible automaton (depicted in Figure~\ref{fig:aut}) and is isomorphic to the rank $2$ lamplighter group $\LL_{2,2}\cong\Z_2^2\wr\Z$.
\end{theoremmain}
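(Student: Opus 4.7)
The plan is to realize the action of $\G$ on the boundary $\partial T$, identified with $\Z_2[[t]]$ via $(x_0, x_1, \ldots) \mapsto \sum_{i \geq 0} x_i t^i$, by affine transformations $\xi \mapsto u\xi + v$, and then analyze the image directly as a semidirect product. Bireversibility is a finite check: invertibility is immediate (output functions are $\sigma$ or identity); for the two remaining conditions one verifies that the induced map on states is a bijection for each input letter (on input $0$ the $4$-cycle $a \mapsto b \mapsto d \mapsto c \mapsto a$; on input $1$ the transposition $a \leftrightarrow d$ fixing $b, c$), and similarly for each output letter.

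I would then seek affine formulas $g(\xi) = u_g \xi + v_g$ for $g \in \{a,b,c,d\}$. Writing $\xi = x_0 + t\xi'$ and expanding the wreath recursion in the two cases $x_0 = 0, 1$ yields a linear system that forces $u_a = u_b = u_c = u_d =: u$ and has the unique solution
\[ u = \frac{1+t+t^2}{(1+t)^2}, \quad v_a = \frac{1}{(1+t)^3}, \quad v_b = \frac{1+t+t^2}{(1+t)^3}, \quad v_c = \frac{t}{(1+t)^3}, \quad v_d = \frac{t^2}{(1+t)^3}, \]
all in $\Z_2[[t]]$ since $1+t$ is a unit there. Observe the resulting relation $v_b = v_a + v_c + v_d$. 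Since $u$ has $t$-adic valuation zero, each map $\xi \mapsto u\xi + v_g$ is a $t$-adic isometry, hence corresponds to a tree automorphism; substituting back into the wreath recursion confirms this tree automorphism coincides with the original generator. The resulting $\rho : \G \to \Aff(\Z_2[[t]])$ is injective because tree automorphisms are determined by their boundary action.

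Writing $g_i = (v_i, u)$ in $\Z_2[[t]] \rtimes \Z_2[[t]]^*$, the product $g_i g_j^{-1}$ is the pure translation $(v_i + v_j, 1)$, and conjugation by any $g_i$ sends $(w, 1)$ to $(uw, 1)$. Let $M$ be the $\Z_2[u, u^{-1}]$-submodule of $\Z_2[[t]]$ generated by the elements $v_i + v_j$. Since $g_i = g_1 (g_1^{-1} g_i)$ with $g_1^{-1} g_i \in M$, we have $\rho(\G) = M \langle g_1 \rangle$, and the subgroup of pure translations in $\rho(\G)$ is exactly $M$. Using $v_b = v_a + v_c + v_d$, the module $M$ is generated over $\Z_2[u,u^{-1}]$ by the two elements $v_a + v_c = (1+t)^{-2}$ and $v_a + v_d = (1+t)^{-1}$. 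Since $1 + t\Z_2[[t]]$ is torsion-free (the identity $(1 + tf)^{2^n} = 1 + t^{2^n} f^{2^n}$ forbids torsion), the element $u$ has infinite order, and hence $\rho(\G) = M \rtimes \langle u \rangle$ with $\langle u \rangle \cong \Z$.

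The main step is showing $M$ is a free $\Z_2[u,u^{-1}]$-module of rank $2$, equivalently that $t \notin \Z_2(u) \subset \Z_2((t))$. This I would deduce from the classical degree formula $[\Z_2(t):\Z_2(u)] = \max(\deg f, \deg g)$ for $u = f/g$ with coprime $f, g \in \Z_2[t]$: here $f = 1+t+t^2$ and $g = (1+t)^2$ are coprime of degree $2$, yielding $[\Z_2(t):\Z_2(u)] = 2$ and hence $t \notin \Z_2(u)$. Therefore $M \cong \Z_2[u, u^{-1}]^2 \cong \bigoplus_{\Z} \Z_2^2$ with $\langle u \rangle$ acting by shift, so $\G \cong M \rtimes \langle u \rangle \cong \Z_2^2 \wr \Z = \LL_{2,2}$.
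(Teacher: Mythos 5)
Your proposal is correct, but it takes a genuinely different route from the paper. The paper does establish the affine realization $a=\tau_{u,1/(t+1)^3}$ with $u=\frac{t^2+t+1}{(t+1)^2}$ (its Theorem~\ref{thm:affine}, which your linear system reproduces exactly, since $t^2+1=(t+1)^2$ over $\Z_2$), but it does \emph{not} use it to prove the main theorem. Instead, the paper sets $x=ab^{-1}$, $z=ad^{-1}$, shows these are spherically homogeneous and that $a$ normalizes $\SHAut(X^*)$, and then proves nontriviality of every $x^{p(a)}z^{q(a)}$ by a minimal-counterexample induction: it computes how the pair of exponent polynomials $(p,q)$ transforms under passing to sections (Lemmas~\ref{lem:conj}--\ref{lem:pq}), splits into four cases according to $\deg p$ versus $\deg q$, and uses the degree-drop identity $\deg(\psi_q+\psi_{a\psi_q})=\deg q-2$ (Lemma~\ref{lem:degree-sum}) to produce a shorter relation, a contradiction. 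You instead work entirely on the boundary: the common multiplier $u$ gives a homomorphism onto $\langle u\rangle\cong\Z$ (torsion-freeness of $1+t\Z_2[[t]]$ in characteristic $2$), the kernel is the $\Z_2[u,u^{-1}]$-module $M$ generated by $(1+t)^{-1}$ and $(1+t)^{-2}$, and freeness of rank $2$ reduces to $t\notin\Z_2(u)$, which follows from $[\Z_2(t):\Z_2(u)]=\max(\deg f,\deg g)=2$ for $u=f/g$ in lowest terms. That one-line field-theory computation replaces the paper's entire chain of section lemmas and case analysis, and it makes visible \emph{why} the rank is $2$ (it is the degree of $u$ as a rational map); the relation $v_b=v_a+v_c+v_d$ correctly cuts the three obvious generators of $M$ down to two. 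The paper's argument, by contrast, stays at the level of wreath recursions and is closer to the orbit-automata techniques it cites. Two small points to tidy up: with the paper's composition convention $gh(y)=h(g(y))$ the pure translation $g_ig_j^{-1}$ comes out as $u^{-1}(v_i+v_j)$ rather than $v_i+v_j$, which changes nothing since $u$ is a unit of $\Z_2[u,u^{-1}]$ and of $\Z_2[[t]]$; and you should say explicitly that $M$ is torsion-free over the PID $\Z_2[u,u^{-1}]$ (it sits inside the field $\Z_2((t))\supset\Z_2(u)$), so that linear independence of the two generators over $\Z_2(u)$ indeed yields a free module of rank $2$ with those generators as a basis.
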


With the result above in mind, we proceed to surveying the history of the topic to motivate our interest in the group $\G$. After the original realization of $\LL_{1,2}$ in~\cite{gns00:automata,grigorch_z:lamplighter} by a 2-state 2-letter automaton, Silva and Steinberg in~\cite{silva_s:lamplighter05} have constructed a family of $n^d$-state $n^d$-letter reset automata generating $\LL_{n,d}$. Thus, the group $\LL_{2,2}$ in this family is generated by 4-state 4-letter automaton.

In~\cite{bartholdi_s:bsolitar} Bartholdi and \v Sun\'ic have constructed a large family of lamplighter groups $\LL_{n,p}$ parametrized by monic polynomials over $\Z_p$ that are invertible in the ring $\Z_p[[t]]$ of formal power series over $\Z_p$. There are two automata in this family generating $\LL_{2,2}$: these automata correspond to polynomials $t^2+t+1$
and $t^2+1$.
Both of these automata are 4-state 2-letter automata, but none of them is bireversible (however, their inverses are reversible). Another reincarnation of  $\LL_{2,2}$ was discovered in~\cite{grigorch_s:essfree}, where it was shown that a 3-state 2-letter automaton (not reversible) generates an index 2 extension of $\LL_{2,2}$.

The first examples of bireversible automata generating lamplighter type groups were constructed in~\cite{bondarenko_dr:lamplighterZ3wrZ16} and~\cite{savchuk_s:affine_automata}. Bondarenko, D'Angeli, and Rodaro in~\cite{bondarenko_dr:lamplighterZ3wrZ16} presented a 3-state 3-letter self-dual bireversible automaton generating $\LL_{1,3}\cong \Z_3\wr\Z$. The second automaton was a 4-state 2-letter automaton generating an index 2 extension of $\LL_{2,2}$. It appeared as the main example in the paper by Sidki and the second author~\cite{savchuk_s:affine_automata} whose main goal was to understand the action of lamplighter type groups on rooted trees.

In most cases when lamplighter group is generated by finite automaton acting on a binary tree, the base group $\oplus_{\Z}\Z_2$, which is generated by infinite number of commuting involutions, consists of spherically homogeneous automorphisms (an automorphism in called spherically homogeneous provided that for each level its states at the vertices of this level all coincide). Thus the whole group usually lies in the normalizer of the group $\SHAut(X^*)$ of all spherically homogeneous automorphisms inside the group $\Aut(X^*)$ of all automorphisms of $X^*$.
It was shown in~\cite{savchuk_s:affine_automata} that this normalizer consists of affine automorphisms of $X^*$ coming from the affine actions on the boundary $X^\infty$ of the tree viewed as uncountable infinite dimensional vector space over $\Z_2$. Moreover, it was shown that every realization of lamplighter group as an automaton group acting on the binary tree is conjugate to the one coming from affine actions. The group $\G$ is not an exception, but it turns out that elements of $\G$ sit in a more narrow class of automorphisms induced by the affine transformations of the boundary of $X^*$ viewed as the ring $\Z_2[[t]]$. For $f,g\in\Z_2[[t]]$ let $\tau_{f,g}$ denote the automorphism of the $X^*$ sending $h(t)$ to $h(t)\cdot f(t)+g(t)$. Then we can describe the generators of $\G$ as follows.

\begin{theoremaffine}
The generators $a,b,c$ and $d$ of $\G$ all lie in $\Aff_{[[t]]}(X^*)$ and are induced by the affine transformations of the form
\[
\begin{array}{lll}
a=\tau_{\frac{t^2+t+1}{t^2+1},\frac{1}{(t+1)^3}},&&b=\tau_{\frac{t^2+t+1}{t^2+1},\frac{t^2+t+1}{(t+1)^3}},\\ c=\tau_{\frac{t^2+t+1}{t^2+1},\frac{t}{(t+1)^3}},&&d=\tau_{\frac{t^2+t+1}{t^2+1},\frac{t^2}{(t+1)^3}}.
\end{array}
\]
\end{theoremaffine}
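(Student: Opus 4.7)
The plan is to reduce Theorem~\ref{thm:affine} to a wreath-recursion calculation in $\Z_2[[t]]$. First I would recall that $X^{\infty}$ identifies with $\Z_2[[t]]$ via $h_0h_1h_2\ldots \leftrightarrow \sum_{i\ge 0}h_i t^i$, under which the ``drop the first letter'' map becomes $h\mapsto h^{\ast}:=(h-h(0))/t$. For $f\in 1+t\Z_2[[t]]$ and arbitrary $g\in\Z_2[[t]]$, the map $\tau_{f,g}\colon h\mapsto hf+g$ is a prefix-length preserving bijection, hence an automorphism of $X^{\ast}$; the hypothesis is satisfied in our setting since $f=(t^2+t+1)/(t^2+1)=1+t/(t+1)^2$.

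The main step is to derive the self-similar decomposition of $\tau_{f,g}$. Writing $h=h_0+th'$ with $h'\in\Z_2[[t]]$ and expanding,
\[
hf+g=(h_0+g_0)+t\bigl(h' f+h_0 f^{\ast}+g^{\ast}\bigr),
\]
where $f^{\ast}=(f-1)/t$ and $g^{\ast}=(g-g_0)/t$. This shows that $\tau_{f,g}$ has root permutation equal to the transposition $(0\,1)$ precisely when $g_0=1$, and that
\[
\tau_{f,g}\big|_{0}=\tau_{f,\,g^{\ast}},\qquad \tau_{f,g}\big|_{1}=\tau_{f,\,f^{\ast}+g^{\ast}}.
\]

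Next I would specialize. Computing in $\Z_2$ one has $(t+1)^3=t^3+t^2+t+1$ and $f^{\ast}=1/(t+1)^2$. For each of $g_a=1/(t+1)^3$, $g_b=(t^2+t+1)/(t+1)^3$, $g_c=t/(t+1)^3$, $g_d=t^2/(t+1)^3$, I would evaluate the constant term $g_x(0)$ and the pair $(g_x^{\ast},\,f^{\ast}+g_x^{\ast})$. For example, $g_a(0)=1$, so the root permutation is a transposition, and using $1+(t+1)^3=t(t^2+t+1)$ one gets $g_a^{\ast}=(t^2+t+1)/(t+1)^3=g_b$, while $f^{\ast}+g_a^{\ast}=(t+1)/(t+1)^3+(t^2+t+1)/(t+1)^3=t^2/(t+1)^3=g_d$. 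The other three cases are analogous rational-function manipulations in $\Z_2[t,(t+1)^{-1}]$, and together they show that the quadruple $(\tau_{f,g_a},\tau_{f,g_b},\tau_{f,g_c},\tau_{f,g_d})$ satisfies exactly the wreath recursion $a=(b,d)\sigma$, $b=(d,b)\sigma$, $c=(a,c)$, $d=(c,a)$ that defines the generators of $\G$ in Theorem~\ref{thm:main}.

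Finally, since any tree automorphism is determined by its root permutation together with its two first-level sections, a routine induction on the level $n$ matches the two quadruples on every $X^n$: the base $n=1$ is the agreement of root permutations noted above, and the inductive step follows because the section pairs $(b,d),(d,b),(a,c),(c,a)$ are the same on both sides. The main obstacle is setting up the wreath recursion for $\tau_{f,g}$ cleanly; once that identity is in hand, the rest is a short computation with rational functions over $\Z_2$.
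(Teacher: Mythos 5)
Your proposal is correct, and the computations check out (I verified all four section pairs; e.g.\ $g_b^{\ast}=\bigl((t^2+t+1)+(t+1)^3\bigr)/\bigl(t(t+1)^3\bigr)=t^2/(t+1)^3=g_d$ and $f^{\ast}+g_b^{\ast}=g_b$, matching $b=(d,b)\sigma$). However, you take a genuinely different route from the paper. The paper works \emph{forward} from the automaton: it first shows (Lemma~\ref{lem:norm}) that $a$ normalizes $\SHAut(X^*)$, invokes Theorem~\ref{thm:normalizer} to conclude $a\in\Aff(X^*)$, computes the matrix $A$ and vector $\bb$ of $\pi_{A,\bb}$ by evaluating $a$ on $0^\infty$ and on the basis vectors $\be_i$, observes that $A$ is Toeplitz so that Proposition~\ref{pro:tau} converts it into $\tau_{f,g}$ with $f$ and $g$ obtained by summing the geometric series $1+t+t^3+t^5+\cdots$ and $1+t+t^4+t^5+\cdots$, and only then applies the section formula $\tau_{f,g}|_x=\tau_{f,x\sigma(f)+\sigma(g)}$ (quoted from~\cite{savchuk_s:affine_automata}) to read off $b=a|_0$, $d=a|_1$, $c=d|_0$. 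You instead work \emph{backward}: you re-derive that section formula from the identity $hf+g=(h_0+g_0)+t(h'f+h_0f^{\ast}+g^{\ast})$, verify that the four candidate transformations satisfy precisely the wreath recursion $a=(b,d)\sigma$, $b=(d,b)\sigma$, $c=(a,c)$, $d=(c,a)$, and conclude by the standard level-by-level uniqueness argument. Your approach is more self-contained and elementary --- it needs neither the normalizer theorem nor the matrix computation --- at the cost of presupposing the answer rather than deriving it; the paper's detour also earns its keep because Lemma~\ref{lem:norm} and the affine description of $a$ are reused in the proof of the main theorem, and your verification in fact yields an independent proof that $a\in\Aff_{[[t]]}(X^*)$ and hence normalizes $\SHAut(X^*)$.
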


There is another motivation to study the group $\G$ from Theorem~\ref{thm:main}. It was initially one of the six groups among those generated by 7421 non-minimally symmetric 4-state invertible automata over 2-letter alphabet studied in~\cite{caponi:thesis2014}, for which the existence of elements of infinite order could not be easily established by the standard known methods. Note that very recently Gillibert~\cite{gillibert:order_undecidable18} has shown that the order problem is undecidable in the class of automaton groups, so there is no hope to have a unified algorithm working in all cases. Moreover, slightly later Bartholdi and Mitrofanov showed that, perhaps, quite surprisingly, the problem remains undecidable even in the class of contracting automaton groups~\cite{bartholdi_m:order_undecidable}.  In~\cite{klimann_ps:orbit_automata} many elements of infinite order in two of these six groups were found using a new technique of orbit automata. And the mentioned example from~\cite{savchuk_s:affine_automata} was one of these two groups. In this paper we use a similar approach to study the structure of the group $\G$, but our proof is somewhat simpler and the automaton that we study generates exactly $\LL_{2,2}$, and not its index 2 extension like in~\cite{savchuk_s:affine_automata}.

The paper has the following simple structure. In Section~\ref{sec:pre} we introduce basic notions and terminology on trees and automata. Section~\ref{sec:main} is devoted to the proof of the main result of the paper.

\section{Preliminaries}
\label{sec:pre}
We start this section by introducing the notions and terminology of automorphisms of regular rooted trees and Mealy automata. For more details, the reader is referred to~\cite{gns00:automata}.

Let $X=\{0,1,\ldots ,d-1\}$ be a finite set with $d\geq 2$ elements (called letters) and let $X^*$ denote the set of all finite words over the alphabet $X$. The set $X^*$ can be given the structure of a rooted $d$-ary tree by declaring that $v$ is adjacent to $vx$ for every $v\in X^*$ and $x\in X$. The empty word corresponds to the root of the tree and for each positive integer $n$ the set $X^n$ corresponds to the $n$-th level of the tree. Also the set $X^\infty$ of all infinite words over $X$ can be identified with the \emph{boundary} of the tree $X^*$ consisting of all infinite paths in the tree without backtracking initiating at the root. We will consider automorphisms of the tree $X^*$ (bijections of $X^*$ that preserve adjacency of vertices). The group of all automorphisms of $X^*$ is denoted by $\Aut(X^*)$. To describe such automorphisms, we will use the language of Mealy automata.

\begin{definition}
A \emph{Mealy automaton} (or simply \emph{automaton}) is a tuple $(Q,X,\pi,\lambda)$, where $Q$ is a set (the set of states), $X$ is a finite alphabet, $\pi\colon Q\times X\to Q$ is the \emph{transition function}
and $\lambda\colon Q\times X\to X$ is the \emph{output function}. If the set of states $Q$ is finite, the automaton is called \emph{finite}. If for every state $q\in Q$ the output function $\lambda_q(x)=\lambda(q,x)$ induces
a permutation of $X$, the automaton $\A$ is called \emph{invertible}. Selecting a state $q\in Q$ produces an \emph{initial automaton} $\A_q$.
\end{definition}

Automata are often represented by their \emph{Moore diagrams}. The Moore diagram of an automaton $\A=(Q,X,\pi,\lambda)$ is a directed graph in which the vertices are the states of $Q$ and the edges have the form $q\stackrel{x|\lambda(q,x)}{\longrightarrow}\pi(q,x)$ for
$q\in Q$ and $x\in X$. Figure~\ref{fig:aut} shows the Moore diagram of the automaton $\A$ that gives rise to our group $\G$ defined in Theorem~\ref{thm:main}.

Every invertible initial automaton $\A_q$ induces an automorphism of $X^*$, which will be also denoted by $\A_q$, defined as follows. Given a word
$v=x_1x_2x_3\ldots x_n\in X^*$, it scans its first letter $x_1$ and outputs $\lambda(q,x_1)$. The rest of the word is handled similarly by the initial automaton $\A_{\pi(q,x_1)}$. So we can actually extend the functions $\pi$ and $\lambda$ to $\pi\colon Q\times X^*\to Q$ and $\lambda\colon  Q\times X^*\to X^*$ via the equations
\[\begin{array}{l}
\pi(q,x_1x_2\ldots x_n)=\pi(\pi(q,x_1),x_2x_3\ldots x_n),\\
\lambda(q,x_1x_2\ldots x_n)=\lambda(q,x_1)\lambda(\pi(q,x_1),x_2x_3\ldots x_n).\\
\end{array}
\]

An automorphism of $X^*$ naturally induces an action on  the boundary of the tree, the set $X^\infty$. In fact, $X^\infty$ endowed with a natural topology in which two infinite words are close if they have large common prefix, is homeomorphic to the Cantor set and every automorphism of $X^*$ gives rise to a homeomorphism of $X^\infty$.

\begin{definition}
The semigroup (group) generated by all states of an automaton $\A$ viewed as automorphisms of the rooted tree $X^*$ under the operation of composition is called an \emph{automaton semigroup (group)} and denoted by $\mathbb{S}(\A)$ (respectively $\mathbb{G}(\A)$).
\end{definition}

In the definition of an automaton, we do not require  the set $Q$ of states to be finite. With this convention, the notion of an automaton group is equivalent to the notions of \emph{self-similar group}~\cite{nekrash:self-similar} and \emph{state-closed group}~\cite{nekrash_s:12endomorph}. However, most of the interesting examples of automaton groups are finitely generated groups defined by finite automata.

We now introduce the notion of a \emph{state} (also called \emph{section}) of an automorphism at a vertex of the tree. Let $g$ be an automorphism of the tree $X^*$ and $x\in X$. For any $v\in
X^*$ we can write \[g(xv)=g(x)v'\] for some $v'\in X^*$. Then the map $g|_x\colon X^*\to X^*$ given by \[g|_x(v)=v'\] defines an automorphism of $X^*$ which we call the \emph{state} of $g$ at vertex $x$. We can inductively extend the definition of a section to any finite word $x_1x_2\ldots x_n\in X^*$ as follows. \[g|_{x_1x_2\ldots x_n}=g|_{x_1}|_{x_2}\ldots|_{x_n}.\]

In fact, any automorphism of $X^*$ can be induced by an invertible initial automaton. Given an automorphism $g$ of $X^*$, we construct an invertible initial automaton $\A(g)$ whose action on $X^*$ coincides with that of $g$ as follows. The set of states of $\A(g)$ is the set $\{g|_v\colon  v\in X^*\}$
of different states of $g$ at the vertices of $X^*$. The transition and output functions are defined by
\[\begin{array}{l}
\pi(g|_v,x)=g|_{vx},\\
\lambda(g|_v,x)=g|_v(x).
\end{array}\]

We will adopt the following convention throughout the paper. If $g$ and $h$ are elements of some group acting on a set $Y$ and $y\in Y$, then $$gh(y)=h(g(y)).$$
Hence the states of any element of an automaton group can be computed as follows. If $g=g_1g_2\ldots g_n$ and $v\in X^*$, then $$g|_v=g_1|_v\cdot g_2|_{g_1(v)}\cdots
g_n|_{g_1g_2\cdots g_{n-1}(v)}.$$

For each automaton group $G$ there is a natural embedding
\[G\hookrightarrow G \wr \Sym(X)\] given by
\begin{equation}
\label{eq:wreath}
G\ni g\mapsto (g_1,g_2,\ldots,g_d)\sigma_g\in G\wr \Sym(X),
\end{equation}
where $g_1,g_2,\ldots,g_d$ are the states of $g$ at the vertices of the first level, and $\sigma_g$ is the permutation of $X$ induced by the action of $g$ on the first level of the tree. If $\sigma_g$ is the trivial permutation, it is customary to omit it in the notation. In the case of a binary rooted tree $\{0,1\}^*$, there is only one nontrivial permutation, namely the transposition $(01)$, which is usually denoted simply by $\sigma$.

The decomposition of all generators of an automaton group under the embedding~\eqref{eq:wreath} is called the \emph{wreath recursion} defining the group. It is a convenient language when doing computations involving the states of automorphisms. Indeed, products and inverses of automorphisms can be found as follows. If $g\mapsto (g_1,g_2,\ldots,g_d)\sigma_g$ and $h\mapsto (h_1,h_2,\ldots,h_d)\sigma_h$ are two elements of $\Aut(X^*)$, then $$gh=(g_1h_{\sigma_g(1)},g_2h_{\sigma_g(2)},\ldots,g_{d}h_{\sigma_g(d)})\sigma_g\sigma_h$$ and
$$g^{-1}=(g^{-1}_{\sigma_g^{-1}(1)},g^{-1}_{\sigma_g^{-1}(2)},\ldots,g^{-1}_{\sigma_g^{-1}(d)})\sigma_g^{-1}$$

We proceed now to the definition of bireversible automaton.

\begin{definition}
Given an automaton $\A=(Q,X,\pi,\lambda)$, we define its inverse as an automaton $\A^{-1}=(Q^{-1},X,\tilde{\pi},\tilde{\lambda})$, where
\[\begin{array}{l}
\tilde{\pi}(q^{-1},x)=\pi(q,\lambda_q^{-1}(x))^{-1},\\
\tilde{\lambda}(q^{-1},x)=\lambda(q,\lambda_q^{-1}(x)).
\end{array}\]
\end{definition}
For every finite automaton, we can construct the \emph{dual automaton} by switching the roles of states and the alphabet and switching the transition and output functions.
\begin{definition}
Given a finite automaton $\A=(Q,X,\pi,\lambda)$, its dual is a finite automaton $\partial\A=(X,Q,\hat{\lambda},\hat{\pi})$, where
\[\begin{array}{l}
\hat\lambda(x,q)=\lambda(q,x),\\
\hat\pi(x,q)=\pi(q,x).
\end{array}\]
for every $x\in X$ and $q\in Q$.
\end{definition}

It is easy to see that the dual of the dual of an automaton $\A$ coincides with $\A$. The semigroup $\mathbb{S}(\partial\A)$ generated by $\partial\A$ acts on the free monoid $Q^*$. This induces the action on $\mathbb{S}(\A)$. Similarly, $\mathbb{S}(\A)$ acts on $\mathbb{S}(\partial\A)$.

\begin{definition}
An automaton $\A$ is called \emph{reversible} if its dual is invertible. It is called \emph{bireversible} if it is invertible, and both $\A$ and $\A^{-1}$ are reversible.
\end{definition}

\begin{proposition}
The automaton $\A$ shown in Figure~\ref{fig:aut} generating the group $\G$ is bireversible.
\end{proposition}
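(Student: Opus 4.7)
The plan is to verify the three defining conditions of bireversibility in turn: invertibility of $\A$, invertibility of the dual $\partial\A$, and invertibility of $\partial(\A^{-1})$. Each is ultimately a direct inspection of the Moore diagram (equivalently, of the wreath recursion), so no real ideas beyond carefully unpacking the definitions from Section~\ref{sec:pre} are needed.

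First, invertibility of $\A$ is immediate from the recursion $a=(b,d)\sigma$, $b=(d,b)\sigma$, $c=(a,c)$, $d=(c,a)$: the first-level permutations induced by $a,b,c,d$ are $\sigma,\sigma,e,e$, all permutations of $X=\{0,1\}$.

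Second, to verify reversibility I would read off the transition function $\pi\colon Q\times X\to Q$ from the recursion and check that for each fixed $x\in X$ the map $q\mapsto\pi(q,x)$ is a bijection of $Q=\{a,b,c,d\}$. On input $0$ the transitions are $a\mapsto b,\ b\mapsto d,\ c\mapsto a,\ d\mapsto c$, and on input $1$ they are $a\mapsto d,\ b\mapsto b,\ c\mapsto c,\ d\mapsto a$. Both maps hit each element of $Q$ exactly once, hence $\partial\A$ is invertible.

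Third, for $\A^{-1}$ I would first compute its wreath recursion using the inversion formula from Section~\ref{sec:pre}. A short calculation yields
\[
a^{-1}=(d^{-1},b^{-1})\sigma,\quad b^{-1}=(b^{-1},d^{-1})\sigma,\quad c^{-1}=(a^{-1},c^{-1}),\quad d^{-1}=(c^{-1},a^{-1}),
\]
so the transitions of $\A^{-1}$ on input $0$ are $a^{-1}\mapsto d^{-1},\ b^{-1}\mapsto b^{-1},\ c^{-1}\mapsto a^{-1},\ d^{-1}\mapsto c^{-1}$, and on input $1$ they are $a^{-1}\mapsto b^{-1},\ b^{-1}\mapsto d^{-1},\ c^{-1}\mapsto c^{-1},\ d^{-1}\mapsto a^{-1}$. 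Both maps are bijections of the state set of $\A^{-1}$, so $\partial(\A^{-1})$ is invertible and $\A$ is bireversible.

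The only real obstacle is bookkeeping: one has to apply the inversion formula correctly so that the first-level sections of $\A^{-1}$ are not swapped, because a wrong wreath recursion for $\A^{-1}$ would invalidate the last bijectivity check even though $\A$ is genuinely bireversible. Once the three permutations above are written down, the conclusion is immediate from the definitions.
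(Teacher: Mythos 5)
Your proof is correct and follows essentially the same route as the paper: verify invertibility of $\A$ from the wreath recursion, check that for each fixed input letter the transition map $q\mapsto\pi(q,x)$ is a bijection of the state set (equivalently, that the dual's output permutations $(abdc)$ and $(ad)$ are genuine permutations), and repeat for the correctly computed recursion of $\A^{-1}$. The only difference is cosmetic: the paper records the dual $\partial\A$ explicitly as a $2$-state automaton over the alphabet $\{a,b,c,d\}$ and leaves the $\A^{-1}$ case to the reader, whereas you spell out both bijectivity checks directly.
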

\begin{proof}
By construction the automaton $\A$ is invertible and its dual $\partial\A$ is given by the wreath recursion
\[
\begin{array}{lcl}
\zero&=&(\one,\one,\zero,\zero)(abdc),\\
\one&=&(\zero,\zero,\one,\one)(ad).
\end{array}\]
So $\partial\A$ is invertible and $\A$ is reversible. Similarly one can check that the dual of $\A^{-1}$ is also invertible. Therefore, $\A$ is a bireversible automaton.
\end{proof}

In order to describe the lamplighter structure of $\G$ in Theorem~\ref{thm:main} we need to introduce the notions of spherically homogeneous~\cite{grigorch_s:essfree} and affine~\cite{savchuk_s:affine_automata,olijnyk_s:free_prods_unitriangular04} automorphisms of the tree.

\begin{definition}
An automorphism $g$ of the tree $X^*$ is called \emph{spherically homogeneous} if for each level $l$ the states of $g$ at the vertices of $X^l$ all coincide.
\end{definition}

Each such automorphism has a form $a=(b,b,\ldots,b)\sigma_1$, $b=(c,c,\ldots,c)\sigma_2,\ldots$, where $\sigma_i$'s are permutations of $X$. For example, automorphisms $a=(a,a)\sigma,b=(a,a)$ are spherically homogeneous automorphisms of the binary tree. It is not hard to see that the following conditions are equivalent:
\begin{itemize}
\item An automorphism $g$ of $X^*$ is spherically homogeneous.
\item For each level $l$ of the tree $X^*$ the states of $g$ at all the vertices of $X^l$ act identically on the first level.
\item For all words $u,v\in X^*$ of the same length, $g|_u=g|_v$.
\end{itemize}

Every spherically homogeneous automorphism can be fully defined by a sequence $\{\sigma_n\}_{n\geq 1}$ of permutations of $X$ where $\sigma_n$ describes the action of $g$ on the $n$-th letter of the input word. Given a sequence $\{\sigma_n\}_{n\geq 1}$, we can denote the corresponding spherically homogeneous automorphism by $[\sigma_n]_{n\geq 1}$.

The set of all spherically homogeneous automorphisms of $X^*$ forms a subgroup $\SHAut(X^*)$ of $\Aut(X^*)$. The description of spherically homogeneous automorphisms given above assures that $\SHAut(X^*)$ is isomorphic to an infinite product of countably many copies of $\Sym(|X|)$ and hence is uncountable. In the case of the binary tree, this group is abelian and isomorphic to the direct product of countably many copies of $\Z_2$. When $d\geq 3$, the group $\SHAut(X^*)$ contains an abelian subgroup consisting of automorphisms whose sections act on the first level as powers of some fixed long $d$-cycle.

In the case of a binary tree, there is a countably generated self-similar dense subgroup $\Delta$ in $\SHAut(X^*)$ defined below.

For an automorphism $g\in\Aut(X^*)$, we will denote by $g^{(n)}$ the automorphism of $X^*$ acting trivially on the $n$-th level, and whose states at all vertices of $X^n$ are equal to $g$. For example, $g^{(0)}=g$, $g^{(1)}=(g,g)$, etc. In particular, we will denote by $\sigma^{(n)}$, $n\geq 0$ the automorphism of $X^*$ that acts on the $(n+1)$-st coordinate in the input word by the long cycle $\sigma$. So $\sigma^{(0)}=(1,1)\sigma$, $\sigma^{(1)}=(\sigma^{(0)},\sigma^{(0)})$, $\ldots$ , $\sigma^{(n+1)}=(\sigma^{(n)},\sigma^{(n)})$. Now define a group $\Delta$ as
\[\Delta=\langle \sigma^{(0)},\sigma^{(1)},\sigma^{(2)},\ldots\rangle.\]


To conclude this section we discuss the notion of affine automorphisms and a related notion of $\Z_d[[t]]$-affine automorphisms. As described above, one of the ways to define an automorphism of $X^*$ is by using the language of automata. However, some automorphisms can be defined also differently. Namely, if we endow the boundary $X^{\infty}$ of the tree with some structure, then some of the transformations of $X^\infty$ will induce automorphisms of $X^*$. For example, one can endow $X^\infty$ with the structure of the ring of $d$-adic numbers~\cite{bartholdi_s:bsolitar,ahmed_s:polynomial_ergodicity} and study the automorphisms induced by polynomials. Here, we will use two other interpretations of the boundary $X^\infty$ as the ring $\Z_d[[t]]$ of formal power series over $\Z_d$, and as an infinite dimensional free $\Z_d$-module $\Z_d^\infty$ (which is a vector space over $\Z_d$ in the case of prime $d$). Both of these interpretations were studied in~\cite{savchuk_s:affine_automata}.

Each infinite word $a_0a_1a_2\ldots\in X^\infty$ can be represented as an element $a_0 + a_1t+\cdots+a_it^i+\cdots$ of $\Z_d[[t]]$.
Let $f(t)=a_0+a_1t+a_2t^2+\ldots$ and $g(t)=b_0+b_1t+b_2t^2+\ldots$ be power series in $\Z_d[[t]]$ with $a_0$ being a unit in $\Z_d$ (so that $f(t)$ is a unit in $\Z_d[[t]]$). We can define an affine transformation $\tau_{f,g}$ of $\Z_d[[t]]$ by
\[\bigl(\tau_{f,g}(h)\bigr)(t)=g(t)+h(t)\cdot f(t).\]
It is shown in~\cite{savchuk_s:affine_automata} that this transformation induces an automorphism of $X^*$ under the above identification of $X^\infty$ with $\Z_d[[t]]$. With a slight abuse of notation we will also denoted it by $\tau_{f,g}$. We will call these automorphisms \emph{$\Z_d[[t]]$-affine automorphisms of $X^*$}.

For example, an automorphism of the form $\tau_{1,g(t)}$ is a spherically homogeneous automorphism of $X^*$ that acts on the $i$-th letter of an input word by $(0,1,\ldots,d-1)^{b_i}\in\Sym(X)$, where $b_i$ is the coefficient at $t^i$ in $g(t)$. In particular, the addition of $t^n$ in $\Z_d[[t]]$ induces $\sigma^{(n)}\in\SHAut(X^*)$, and thus the group of automorphisms induced by addition of all possible polynomials in $\Z_d[[t]]$ is exactly $\Delta$.

A more general class of affine automorphisms of $X^*$ is obtained by viewing $X^\infty$ as an infinite dimensional free $\Z_d$-module $\Z_d^\infty$, where we treat a word $a_0a_1a_2\ldots\in X^\infty$ as an infinite-dimensional row ``vector'' $[a_0,a_1,a_2,\ldots]\in\Z_d^\infty$.
The set $\{\be_i\}_{i\geq 1}$, where $\be_i=[0,0, \ldots ,0,1,0, \ldots]$ with $1$ at position $i$, serves as a natural basis for $\Z_d^\infty$.

Let $A$ be an infinite upper triangular matrix with entries from $\Z_d$ whose diagonal entries are units in $\Z_d$. We will denote the set of all such matrices by $U_{\infty}\Z_d$. Also let $\bb\in\Z_d^\infty$ be a row vector. We define the transformation $\pi_{A,\bb}\colon \Z_d^\infty\to\Z_d^\infty$ by
\[\pi_{A,\bb}(\bx)=\bb+\bx\cdot A.\]
which is always well-defined since $A$ is upper triangular. As shown in~\cite{savchuk_s:affine_automata} every such transformation induces an automorphism of the tree $X^*$ which will be also denoted by $\pi_{A,\bb}$ and called an \emph{affine automorphism of $X^*$}.

The set $\Aff(X^*)=\{\pi_{A,\bb}\in\Aut(X^*)\mid A\in U_\infty\Z_d,\ \bb\in\Z_d^\infty\}$ forms a group which is called the \emph{group of affine automorphisms of} $X^*$. Let $I$ denote the infinite identity matrix over $\Z_d$. Then the set $\Aff_{I}(X^*)=\{\pi_{I,\bb}, \bb\in\Z_d^\infty\}$ is a subgroup of $\Aff(X^*)$ which is called the group of \emph{affine shifts}. Indeed, $\Aff_{I}(X^*)$ is the topological closure of the group $\Delta$ described above. Moreover, $\Aff_{I}(X^*)$ is a subgroup of the group $\SHAut(X^*)$ of spherically homogeneous automorphisms of $X^*$. They coincide when $d=2$.

The following two results from~\cite{savchuk_s:affine_automata} will be important in the proof of the main theorem in Section~\ref{sec:main}.

\begin{proposition}[\cite{savchuk_s:affine_automata}]
\label{pro:tau}
Let $f(t)=\sum_{n=0}^{\infty}a_nt^n,g(t)=\sum_{n=0}^{\infty}b_nt^n\in\Z_d[[t]]$ be two power series with $a_0$ a unit in $\Z_d$. Then the $\Z_d[[t]]$-affine automorphism $\tau_{f,g}$ coincides with the affine automorphism $\pi_{A,\bb}$ for $\bb=[b_0,b_1,b_2,\ldots]$ and
$$A=\left[\begin{array}{ccccc}
a_{0}&a_{1}&a_{2}&a_3&\ldots\\
0&a_{0}&a_{1}&a_2&\ldots\\
0&0&a_{0}&a_1&\ldots\\
0&0&0&a_{0}&\ldots\\
\vdots&\vdots&\vdots&\vdots&\ddots
\end{array}\right]$$
\end{proposition}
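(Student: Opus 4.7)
The plan is to verify the claim by a direct coordinate-wise computation, exploiting that $\tau_{f,g}$ and $\pi_{A,\bb}$ are both determined by their action on the boundary $X^\infty$, identified respectively with $\Z_d[[t]]$ and with $\Z_d^\infty$. Writing a generic element as $h(t)=\sum_{n\geq 0}c_n t^n\longleftrightarrow[c_0,c_1,c_2,\ldots]$, it suffices to check that the two transformations produce the same output sequence for every $h$.

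First I would expand $\tau_{f,g}(h)=g+h\cdot f$ as a power series: by the Cauchy product formula, its coefficient at $t^n$ equals
\[
b_n+\sum_{k=0}^{n}c_k a_{n-k}.
\]
Next I would compute $\pi_{A,\bb}(h)=\bb+h\cdot A$ as a row vector. The matrix $A$ displayed in the statement is the upper-triangular Toeplitz matrix with $A_{ij}=a_{j-i}$ when $j\geq i$ and $0$ otherwise (using $0$-based indexing). Hence the $j$-th entry of $h\cdot A$ equals $\sum_{i=0}^{j}c_i a_{j-i}$, and adding the $j$-th entry $b_j$ of $\bb$ reproduces exactly the Cauchy product expression above. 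Thus the two output sequences coincide, so $\tau_{f,g}$ and $\pi_{A,\bb}$ induce the same transformation of $X^\infty$, and therefore the same automorphism of $X^*$.

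There is no serious obstacle here: the computation amounts to recognising that $A$ is precisely the matrix of the $\Z_d$-linear map ``multiplication by $f$'' in the standard basis $\{1,t,t^2,\ldots\}$ of $\Z_d[[t]]$, so the identification is almost tautological. The only points needed for well-definedness are that $f$ is a unit in $\Z_d[[t]]$ (forced by $a_0$ being a unit in $\Z_d$), which ensures $\tau_{f,g}\in\Aut(X^*)$, and that $A\in U_\infty\Z_d$, which is immediate because its diagonal is the constant sequence $a_0$. Once these checks are in place the coefficient comparison above completes the proof.
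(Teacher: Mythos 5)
Your computation is correct and complete: identifying the matrix $A$ as the upper-triangular Toeplitz matrix of ``multiplication by $f$'' and matching the Cauchy-product coefficient $b_n+\sum_{k=0}^n c_k a_{n-k}$ against the $n$-th entry of $\bb+\bx\cdot A$ is exactly the verification needed, and the well-definedness remarks (finite sums from upper-triangularity, $a_0$ a unit) cover the only delicate points. Note that the paper itself states this proposition as an imported result from \cite{savchuk_s:affine_automata} and gives no proof, so there is nothing internal to compare against; your argument is the standard one and would serve as a self-contained proof.
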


Thus, the set $\Aff_{[[t]]}(X^*)=\{\tau_{f,g}\in\Aut(X^*)\mid f,g\in\Z_d[[t]], f(t)\ \text{ is a unit in }\ Z_d[[t]]\}$ forms a proper subgroup of $\Aff(X^*)$ and is called the \emph{group of $\Z_d[[t]]$-affine automorphisms of} $X^*$.

\begin{theorem}[\cite{savchuk_s:affine_automata}]
\label{thm:normalizer}
The normalizer $N$ of the group $\Aff_{I}(X^*)$ in $\Aut(X^*)$ coincides with the group $\Aff(X^*)$ of all affine automorphisms. In particular, in the case of the binary tree, the normalizer of $\SHAut(\{0,1\}^*)$ in $\Aut(\{0,1\}^*)$ is $\Aff(\{0,1\}^*)$.
\end{theorem}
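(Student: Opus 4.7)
The plan is to prove the two inclusions $\Aff(X^*)\subseteq N$ and $N\subseteq \Aff(X^*)$ separately, working throughout under the identification of $X^\infty$ with $\Z_d^\infty$.

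The inclusion $\Aff(X^*)\subseteq N$ I would handle by a direct conjugation computation. For $g=\pi_{A,\bb}$ with $g(\bx)=\bb+\bx A$ and $g^{-1}(\bx)=(\bx-\bb)A^{-1}$, and for any affine shift $h=\pi_{I,\bc}$, the convention $(g^{-1}hg)(\bx)=g(h(g^{-1}(\bx)))$ gives
\[
g(h((\bx-\bb)A^{-1}))=g((\bx-\bb)A^{-1}+\bc)=\bb+(\bx-\bb)+\bc A=\bx+\bc A,
\]
so $g^{-1}\pi_{I,\bc}g=\pi_{I,\bc A}\in\Aff_I(X^*)$ and $g$ normalizes $\Aff_I(X^*)$.

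For the reverse inclusion I would take $g\in N$ and, for each $\bc\in\Z_d^\infty$, define $\phi(\bc)$ by $g^{-1}\pi_{I,\bc}g=\pi_{I,\phi(\bc)}$. Substituting $\bx=g(\by)$ into the identity $g(g^{-1}(\bx)+\bc)=\bx+\phi(\bc)$ produces the functional equation $g(\by+\bc)=g(\by)+\phi(\bc)$; setting $\by=0$ identifies $\phi(\bc)=g(\bc)-g(0)$. Putting $\bb=g(0)$ and $\tilde g(\bx)=g(\bx)-\bb$, this functional equation becomes $\tilde g(\by+\bc)=\tilde g(\by)+\tilde g(\bc)$, so $\tilde g$ is additive and hence automatically $\Z_d$-linear. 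Next, since $g$ is a tree automorphism it preserves the prefix relation, which translates to $\tilde g$-invariance of every submodule $V_n=\{\bx\in\Z_d^\infty\mid x_1=\cdots=x_n=0\}$. Writing $\tilde g(\be_i)=\sum_j a_{ij}\be_j$, the relations $\tilde g(V_n)\subseteq V_n$ force $a_{ij}=0$ whenever $j<i$, so $A=(a_{ij})$ is upper triangular. Because $\tilde g$ induces a bijection on the finite quotient $\Z_d^\infty/V_n\cong\Z_d^n$ for every $n$, the upper-left $n\times n$ block of $A$ must be invertible over $\Z_d$, which forces each diagonal entry $a_{ii}$ to be a unit. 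Hence $A\in U_\infty\Z_d$ and $g=\pi_{A,\bb}\in\Aff(X^*)$. The ``in particular'' clause for the binary tree then follows at once, because $\Aff_I(\{0,1\}^*)=\SHAut(\{0,1\}^*)$ when $d=2$, as already noted in the paper.

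The main subtlety will be the transition from ``$\tilde g$ is an additive tree automorphism'' to ``$\tilde g$ acts as $\bx\mapsto\bx A$ for some $A\in U_\infty\Z_d$''. Two things must be verified simultaneously: that $\tilde g$ is actually captured by the entries $a_{ij}=(\tilde g(\be_i))_j$ on all of $\Z_d^\infty$ rather than only on the finitely supported vectors, and that the matrix action is coordinate-wise well defined, which is exactly what upper triangularity guarantees through finite column supports. When $d$ is composite the distinction between nonzero and unit diagonal entries becomes genuine, but the filtration argument through the $V_n$ delivers both the upper-triangular shape and the unit-diagonal condition in one stroke.
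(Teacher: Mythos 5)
The paper does not contain a proof of this theorem at all: it is imported verbatim from \cite{savchuk_s:affine_automata} and used as a black box, so there is no internal argument to measure yours against. Judged on its own, your proof is correct and essentially complete. The forward inclusion is the right one-line conjugation computation (it tacitly also uses that $A^{-1}$ is again upper triangular with unit diagonal, so that $\pi_{A,\bb}^{-1}\in\Aff(X^*)$ --- worth a sentence). For the reverse inclusion, the derivation of additivity of $\tilde g$ from the normalizer condition, the upper triangularity from $\tilde g(V_n)\subseteq V_n$ (which holds because $g$ preserves agreement of length-$n$ prefixes and $\tilde g$ fixes $0^\infty$, so $V_n$, the set of boundary points sharing the prefix $0^n$ with $0^\infty$, is preserved), and the unit-diagonal condition via invertibility of the induced map on $\Z_d^\infty/V_n\cong\Z_d^n$ are all sound; additivity does upgrade automatically to $\Z_d$-linearity since the scalars are integers mod $d$. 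The one step you flag as ``the main subtlety'' deserves to be written out rather than gestured at, and the filtration does exactly the work you predict: for any $\bx$ and any $n$, write $\bx=\bx_{\le n}+\bx_{>n}$ with $\bx_{\le n}$ finitely supported and $\bx_{>n}\in V_n$; then additivity gives that the first $n$ coordinates of $\tilde g(\bx)$ agree with those of $\tilde g(\bx_{\le n})=\sum_{i\le n}x_i\,\tilde g(\be_i)=\bx_{\le n}A$, and upper triangularity gives that these agree with the first $n$ coordinates of $\bx A$ (the $j$-th coordinate of $\bx A$ involves only $x_1,\dots,x_j$); since $n$ is arbitrary, $\tilde g(\bx)=\bx A$ everywhere, with no separate continuity axiom needed. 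With that paragraph made explicit the argument is complete, and the binary-tree clause follows as you say from the equality $\Aff_{I}(\{0,1\}^*)=\SHAut(\{0,1\}^*)$ already recorded in the paper.
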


\section{The Structure of Group $\G$}
\label{sec:main}
In this section we will study the structure of the group $\G$ generated by the $4$-state automaton $\A$ whose Moore diagram is shown in Figure~\ref{fig:aut}. We will show that $\G$ is isomorphic to the rank 2 \emph{lamplighter} group $\LL_{2,2}\cong\Z_2^2 \wr \Z$. For that, we use the technique similar to the one developed in~\cite{savchuk_s:affine_automata}. We also used a \verb"GAP" package \verb"Automgrp"~\cite{muntyan_s:automgrp} to perform and check most of the calculations here.
\begin{figure}[!h]
         \begin{center}
         \includegraphics{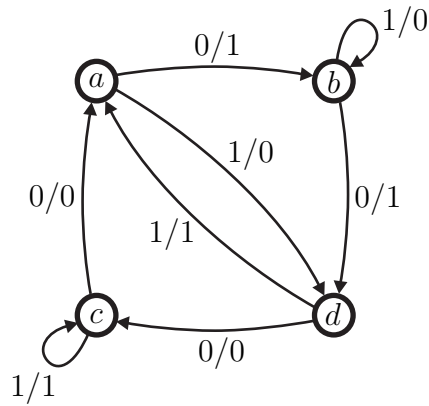}
         \end{center}
 \caption{The Automaton $\A$ generating the group~$\G$.\label{fig:aut}}
 \end{figure}

The wreath recursion of $\G$ is given by:
$$\begin{array}{lcl}
a&=&(b,d)\sigma,\\
b&=&(d,b)\sigma,\\
c&=&(a,c),\\
d&=&(c,a).
\end{array}$$
Let us put $x=ab^{-1}$, $y=ac^{-1}$ and $z=ad^{-1}$. It is straightforward to verify that the subgroup $\langle x,y,z\rangle$ is isomorphic to the $4$-element Klein group $\Z_2^2$. In particular, we have $y=xz=zx$ and $x^2=z^2=1$ (so $x^{-1}=x$ and $z^{-1}=z$). Therefore, $\G=\langle x,z,a\rangle$.

Observe, that the following relations hold in $\G$:

\begin{equation}
\label{eq:relations}
\begin{array}{l}
x=ab^{-1}=ba^{-1}=dc^{-1}=cd^{-1},\\
y=ac^{-1}=ca^{-1}=bd^{-1}=db^{-1},\\
z=ad^{-1}=da^{-1}=bc^{-1}=cb^{-1}.
\end{array}
\end{equation}

Since $x=ab^{-1}=(bd^{-1},db^{-1})$, $y=ac^{-1}=(bc^{-1},da^{-1})\sigma$, $z=ad^{-1}=(ba^{-1},dc^{-1})\sigma$, using relations~\eqref{eq:relations} and the notation introduced in the preliminary section, we obtain
\[x=y^{(1)},\quad y=z^{(1)}\sigma,\quad z=x^{(1)}\sigma.\]
So $x,y,z\in \SHAut(X^*)$.

\begin{lemma}
\label{lem:norm}
The automorphism $a$ lies in the normalizer of the group $\SHAut(X^*)$.
\end{lemma}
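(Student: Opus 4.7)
The plan is to reduce the statement to checking a topologically dense subgroup and then to induct on the wreath recursion. In the binary case $\SHAut(\{0,1\}^*)\cong\prod_{n\geq 0}\Z_2$ is a compact abelian group equal to the topological closure of $\Delta=\langle\sigma^{(n)}:n\geq 0\rangle$. Because conjugation by $a$ is continuous and $\SHAut(X^*)$ is closed in $\Aut(X^*)$, it suffices to verify that $k_n:=a\sigma^{(n)}a^{-1}$ lies in $\SHAut(X^*)$ for every $n\geq 0$ and then to upgrade the resulting containment $a\SHAut(X^*)a^{-1}\subseteq\SHAut(X^*)$ to equality.

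The base case $n=0$ is a direct wreath recursion calculation using $y=bd^{-1}=db^{-1}$ from \eqref{eq:relations} together with $y^2=1$: one obtains
\[
k_0=a\sigma^{(0)}a^{-1}=(b,d)(d^{-1},b^{-1})\sigma=(bd^{-1},db^{-1})\sigma=(y,y)\sigma,
\]
which is spherically homogeneous because $y\in\SHAut(X^*)$ and the two first-level states coincide. For the inductive step, writing $\sigma^{(n)}=(\sigma^{(n-1)},\sigma^{(n-1)})$ and using $b=xa$, $d=za$ with $x,z\in\SHAut(X^*)$, the wreath recursion yields
\[
k_n=(b\sigma^{(n-1)}b^{-1},\,d\sigma^{(n-1)}d^{-1})=(xk_{n-1}x^{-1},\,zk_{n-1}z^{-1}).
\]
By the inductive hypothesis $k_{n-1}\in\SHAut(X^*)$, and since $\SHAut(\{0,1\}^*)$ is abelian the outer conjugations by $x$ and $z$ both collapse, giving $k_n=(k_{n-1},k_{n-1})\in\SHAut(X^*)$.

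To upgrade the containment to an equality, I would show that the subgroup $\langle k_n:n\geq 0\rangle$ is already dense in $\SHAut(X^*)$. The recursion $k_n=(k_{n-1},k_{n-1})$ means the characteristic $0/1$-sequence of $k_n$ (recording where the element acts as $\sigma$) is the shift of that of $k_{n-1}$ by one coordinate. Since the sequence of $k_0=(y,y)\sigma$ begins with a $1$, the leading nonzero entry of $k_n$ is at position $n+1$, so in every finite quotient $(\Z_2)^L$ of $\SHAut(X^*)$ the family $\{k_0,\ldots,k_{L-1}\}$ projects to an upper triangular family with $1$'s on the diagonal, and hence a basis. Because $a\SHAut(X^*)a^{-1}$ is a closed subgroup (continuous image of a compact group) containing this dense subgroup, it coincides with $\SHAut(X^*)$, placing $a$ in the normalizer.

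The delicate point is precisely this last step, since a continuous injection of a profinite group into itself need not be surjective; everything before it is a straightforward induction. A conceptually parallel alternative would be a symmetric induction for $a^{-1}=(a^{-1}z,a^{-1}x)\sigma$ whose inductive step again collapses by abelianness, but at the cost of a self-referential base case: one must check $a^{-1}ya\in\SHAut(X^*)$ by setting up the pair $m:=a^{-1}ya$, $n:=a^{-1}xa$ and observing via the wreath recursion that it satisfies the self-similar system $m=(n,n)\sigma$, $n=(m,m)$, which is manifestly spherically homogeneous.
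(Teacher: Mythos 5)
Your proof is correct. Its skeleton coincides with the paper's---reduce to the dense subgroup $\Delta=\langle\sigma^{(0)},\sigma^{(1)},\ldots\rangle$ and induct on $n$---but the two arguments differ in how the induction is organized and, more substantively, in how the normalizer condition is obtained in both directions. The paper tracks the conjugates $(\sigma^{(n)})^g$ for all four generators $g\in\{a,b,c,d\}$ at once, derives the coupled recursions $(\sigma^{(n+1)})^a=((\sigma^{(n)})^d,(\sigma^{(n)})^b)$, etc., and shows inductively that the four conjugates coincide; you instead factor $b=xa$ and $d=za$ and use commutativity of $\SHAut(\{0,1\}^*)$ to collapse everything to the single recursion $k_n=(k_{n-1},k_{n-1})$, which is cleaner. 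For the opposite containment the paper simply runs the symmetric computation for $a^{-1}$ (dispatched with ``similarly''), whereas you prove that the conjugates $k_n$ themselves generate a dense subgroup of $\SHAut(X^*)$, so that the closed subgroup $a\SHAut(X^*)a^{-1}$, containing their closure, must be all of $\SHAut(X^*)$. This buys an explicit treatment of the genuine subtlety you flag---a continuous injective endomorphism of $\prod_{n\ge 0}\Z_2$ (e.g.\ the shift) need not be onto, so the one-sided containment alone would not suffice---and your unitriangularity argument for density is valid, since the first nontrivial coordinate of the characteristic sequence of $k_n$ advances by one at each step while the diagonal entries are $1$. The paper's route avoids that discussion at the price of a second induction; both yield complete proofs.
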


\begin{proof}
Since $\Delta=\langle \sigma^{(0)},\sigma^{(1)},\ldots\rangle$ is dense in $\SHAut(X^*)$ in the case of binary tree, it suffices to show that $(\sigma^{(n)})^a, (\sigma^{(n)})^{a^{-1}}\in \SHAut(X^*)$ for $n=0,1,\ldots$ Direct calculations give
\[\begin{array}{l}
(\sigma^{(0)})^a=(d^{-1},b^{-1})\sigma(1,1)\sigma(b,d)\sigma=(d^{-1}b,b^{-1}d)\sigma,\\ d^{-1}b=b^{-1}d,\\
d^{-1}b=(c^{-1}d,a^{-1}b)\sigma,\\
c^{-1}d=a^{-1}b,\\
c^{-1}d=(a^{-1}c,c^{-1}a).\\
\end{array}
\]
Therefore, using again relations~\eqref{eq:relations} we obtain $(\sigma^{(0)})^a\in \SHAut(X^*)$. It follows also by direct calculations that
\begin{equation}
\label{equ:iterations}
\begin{array}{l}
(\sigma^{(n+1)})^a=((\sigma^{(n)})^d,(\sigma^{(n)})^b),\\
(\sigma^{(n+1)})^b=((\sigma^{(n)})^b,(\sigma^{(n)})^d),\\
(\sigma^{(n+1)})^c=((\sigma^{(n)})^a,(\sigma^{(n)})^c),\\
(\sigma^{(n+1)})^d=((\sigma^{(n)})^c,(\sigma^{(n)})^a)\\
\end{array}
\end{equation}
for $n=0,1,\ldots $. We claim that $(\sigma^{(n)})^a=(\sigma^{(n)})^b=(\sigma^{(n)})^c=(\sigma^{(n)})^d$ for $n=0,1,\ldots $. This can be proved by induction on $n$ as follows. We have
\[\begin{array}{l}
(\sigma^{(0)})^a=(d^{-1},b^{-1})\sigma(1,1)\sigma(b,d)\sigma=(d^{-1}b,b^{-1}d)\sigma,\\
(\sigma^{(0)})^b=(b^{-1},d^{-1})\sigma(1,1)\sigma(d,b)\sigma=(b^{-1}d,d^{-1}b)\sigma,\\
(\sigma^{(0)})^c=(a^{-1},c^{-1})(1,1)\sigma(a,c)=(a^{-1}c,c^{-1}a)\sigma,\\
(\sigma^{(0)})^d=(c^{-1},a^{-1})(1,1)\sigma(c,a)=(c^{-1}a,a^{-1}c)\sigma.\\
\end{array}
\]
Hence $(\sigma^{(0)})^a=(\sigma^{(0)})^b=(\sigma^{(0)})^c=(\sigma^{(0)})^d$. Now equations~\eqref{equ:iterations} prove our claim. It follows immediately that $(\sigma^{(n)})^a\in \SHAut(X^*)$ for $n=1,2,\ldots$ Similarly we can show that $(\sigma^{(n)})^{a^{-1}}$ for $n=0,1,\ldots$
\end{proof}

Lemma~\ref{lem:norm} together with Theorem~\ref{thm:normalizer} give the following corollary.

\begin{corollary}
The automorphism $a$ lies in $\Aff(X^*)$ and is equal to $\pi_{A,\bb}$ for the matrix $A$ with the $i$-th row $\ba_i=[0^{i-1},1,(1,0)^\infty]$ (depicted in Figure~\ref{fig:matrix}) and $\bb=[(1,1,0,0)^\infty]$.
\end{corollary}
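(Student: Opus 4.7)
The first assertion, that $a\in\Aff(X^*)$, follows immediately from Lemma~\ref{lem:norm} together with Theorem~\ref{thm:normalizer} applied to the binary tree (where $\Aff_I(X^*)=\SHAut(X^*)$). This yields $a=\pi_{A,\bb}$ for some upper triangular matrix $A$ over $\Z_2$ with $1$'s on the diagonal and some vector $\bb\in\Z_2^\infty$, reducing the corollary to identifying $\bb$ and the rows $\ba_i$ explicitly.

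Computing $\bb$ is straightforward: since $\pi_{A,\bb}(\zero)=\bb$, I would trace the wreath recursion on the input $000\ldots$, where each of $a,b,c,d$ reads a $0$ and transitions as $a\to b\to d\to c\to a$ with outputs $1,1,0,0$. This four-cycle immediately gives $\bb=a(\zero)=[(1,1,0,0)^\infty]$. For the first row I would compute $a(\be_1)$ directly from the relations $a(1w)=0\cdot d(w)$ and $d(\zero)=(0011)^\infty$, then take $\ba_1=a(\be_1)+\bb$ in $\Z_2$ to obtain $[1,1,0,(1,0)^\infty]$, matching the stated formula at $i=1$.

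The remaining rows I would derive via a self-similar bootstrap, keyed on the structural observation that $b,c,d$ all have the same linear part $A$. Indeed, $x=ab^{-1}$, $y=ac^{-1}$, $z=ad^{-1}$ lie in $\SHAut(X^*)=\Aff_I(X^*)$, so each of $b,c,d$ differs from $a$ by an affine shift and therefore inherits the matrix $A$. Using $\be_{i+1}=0\cdot\be_i$ and the wreath recursion $a=(b,d)\sigma$, we have $a(\be_{i+1})=1\cdot b(\be_i)$ and $a(\zero)=1\cdot b(\zero)$, so
\[
\ba_{i+1}=a(\be_{i+1})+a(\zero)=0\cdot\bigl(b(\be_i)+b(\zero)\bigr)=(0,\ba_i),
\]
the last equality because $b$ has linear part $A$. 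Induction from $\ba_1$ then delivers $\ba_i=[0^{i-1},1,(1,0)^\infty]$ for every $i\ge 1$. The main technical point is the observation that $b,c,d$ share the linear part with $a$; once that is in hand, a single first-row calculation bootstraps to the full Toeplitz description of $A$, with the rest reducing to routine tracing of the automaton.
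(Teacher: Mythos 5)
Your proposal is correct and follows the paper's proof in its essential structure: membership in $\Aff(X^*)$ via Lemma~\ref{lem:norm} and Theorem~\ref{thm:normalizer}, then $\bb=a(0^\infty)$ and $\ba_i=a(\be_i)-\bb$. The only difference is that you actually carry out the row computation that the paper leaves as an exercise, and you do so via a tidy self-similar induction (using that $b,c,d$ share the linear part $A$ since $ab^{-1},ac^{-1},ad^{-1}\in\SHAut(X^*)=\Aff_I(X^*)$, so $\ba_{i+1}=(0,\ba_i)$), which is a nice economy over computing each $a(\be_i)$ directly.
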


\begin{figure}[!h]
         \begin{center}
         \includegraphics[width=200pt]{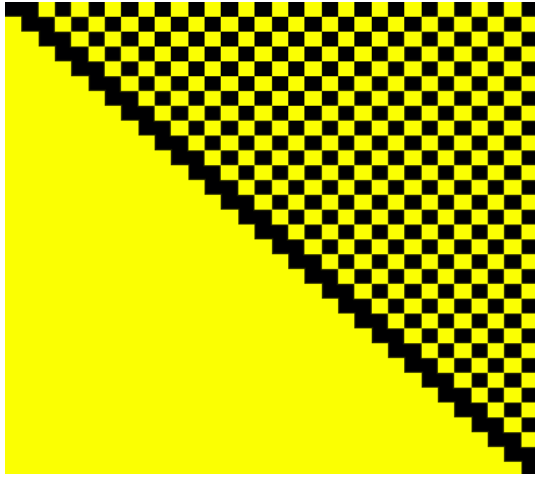}
         \end{center}
 \caption{A $32\times32$ minor of the matrix $A$ corresponding to the affine automorphism $a$.\label{fig:matrix}}
 \end{figure}

\begin{proof}
We can easily find $\bb$ by computing $\bb=\bb+[0,0,0,\ldots]\cdot A=\pi_{A,\bb}(0^\infty)=a(0^\infty)$. Let $\be_i=[0,0,\ldots,0,1,0,\ldots]$ be the $i$-th standard basis vector in $\Z_2^\infty$. Then we can compute the $i$-th row $\ba_i$ of matrix $A$ as follows. Since $a(\be_i)=\bb+\be_i\cdot A=\bb+\ba_i$, we obtain $\ba_i=a(\be_i)-\bb$. We leave the computations for the reader as an exercise.
\end{proof}

As immediate corollary, we obtain the following theorem.

\begin{theorem}
\label{thm:affine}
The generators $a,b,c$ and $d$ of $\G$ all lie in $\Aff_{[[t]]}(X^*)$ and are induced by the affine transformations of the form
\[
\begin{array}{lll}
a=\tau_{\frac{t^2+t+1}{t^2+1},\frac{1}{(t+1)^3}},&&b=\tau_{\frac{t^2+t+1}{t^2+1},\frac{t^2+t+1}{(t+1)^3}},\\ c=\tau_{\frac{t^2+t+1}{t^2+1},\frac{t}{(t+1)^3}},&&d=\tau_{\frac{t^2+t+1}{t^2+1},\frac{t^2}{(t+1)^3}}.
\end{array}
\]
\end{theorem}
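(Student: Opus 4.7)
The plan is to derive this theorem as a short consequence of the corollary immediately preceding it by applying Proposition~\ref{pro:tau} to $a$ and then using the relations $b=xa,\ c=ya,\ d=za$ (where $x,y,z\in\SHAut(X^*)$) to transfer the same linear part to the other three generators, and finally computing their shifts.

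First I would convert the matrix description of $a$ from the preceding corollary into the $\tau_{f,g}$ form. The matrix $A$ displayed there is upper triangular and Toeplitz, with first row $\ba_1=[1,1,0,1,0,1,0,\ldots]$, so Proposition~\ref{pro:tau} immediately gives $a=\tau_{f,g_a}$ with
\[
f(t)=1+t+t^3+t^5+t^7+\cdots,\qquad g_a(t)=1+t+t^4+t^5+t^8+t^9+\cdots.
\]
Next I would recognize these two series as the claimed rational functions by geometric-series manipulations in $\Z_2[[t]]$: $f(t)=1+t\cdot\tfrac{1}{1+t^2}=\tfrac{t^2+t+1}{t^2+1}$, and (using the characteristic-two identity $(1+t)^4=1+t^4$) $g_a(t)=(1+t)\cdot\tfrac{1}{1+t^4}=\tfrac{1+t}{(1+t)^4}=\tfrac{1}{(t+1)^3}$.

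For $b,c,d$ I would argue that all four generators share the same linear part $f$. Indeed, the relations~\eqref{eq:relations} give $b=x^{-1}a=xa$, $c=y^{-1}a=ya$, $d=z^{-1}a=za$, since $x,y,z$ have order two. In the binary case $\SHAut(X^*)=\Aff_I(X^*)$, so each of $x,y,z$ has the form $\pi_{I,\cdot}$ and left-multiplication by it preserves the matrix $A$: $b=\pi_{A,\bb_b}$, $c=\pi_{A,\bb_c}$, $d=\pi_{A,\bb_d}$. Since $A$ is still Toeplitz, Proposition~\ref{pro:tau} again yields $b=\tau_{f,g_b}$, $c=\tau_{f,g_c}$, $d=\tau_{f,g_d}$ for the same $f$.

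Finally I would read off each shift as the power series corresponding to $\bb_?=?(0^\infty)$, directly from the wreath recursion. Using $a=(b,d)\sigma$, $b=(d,b)\sigma$, $c=(a,c)$, $d=(c,a)$ and feeding $0^\infty$ through them yields the self-referential computation $a(0^\infty)=1\cdot b(0^\infty)=1\cdot 1\cdot d(0^\infty)=1\cdot 1\cdot 0\cdot c(0^\infty)=1\cdot 1\cdot 0\cdot 0\cdot a(0^\infty)$, so $a(0^\infty)=(1100)^\infty$ and $b(0^\infty),c(0^\infty),d(0^\infty)$ are obtained from it by stripping one, two, three letters respectively. Summing each resulting eventually $(1100)$-periodic series gives
\[
g_b=\tfrac{1+t^3}{1+t^4},\qquad g_c=\tfrac{t+t^2}{1+t^4},\qquad g_d=\tfrac{t^2+t^3}{1+t^4},
\]
which collapse via $1+t^4=(1+t)^4$ and the factorization $1+t^3=(1+t)(1+t+t^2)$ to the four fractions claimed in the statement. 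The main obstacle is purely bookkeeping: identifying the eventually $(1100)$-periodic power series with their closed-form rational expressions and keeping the characteristic-two cancellations $(1+t)^4=1+t^4$ visible; no new conceptual ingredient beyond Proposition~\ref{pro:tau} and the preceding corollary is needed.
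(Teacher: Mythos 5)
Your proposal is correct, and it reaches the theorem by a route that differs from the paper's in its second half. For the generator $a$ you do exactly what the paper does: read off $f=1+t+t^3+t^5+\cdots$ and $g_a=1+t+t^4+t^5+\cdots$ from the matrix $A$ and the vector $\bb$ of the preceding corollary via Proposition~\ref{pro:tau}, and sum the geometric series using $(1+t)^4=1+t^4$ in characteristic two. For $b$, $c$, $d$ the paper instead invokes Corollary~2.6 of~\cite{savchuk_s:affine_automata}, which gives the section formula $\tau_{f,g}|_x=\tau_{f,x\sigma(f)+\sigma(g)}$, and computes $b=a|_0$, $d=a|_1$, $c=d|_0$; you avoid that external citation by using the relations $b=xa$, $c=ya$, $d=za$ with $x,y,z\in\SHAut(X^*)=\Aff_I(\{0,1\}^*)$, observing that composing with an affine shift $\pi_{I,\cdot}$ leaves the matrix $A$ (hence the linear part $f$) unchanged, and then extracting the translation parts as the eventually periodic words $?(0^\infty)$ obtained from the wreath recursion. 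Both arguments are sound and of comparable length; yours is more self-contained (everything reduces to Proposition~\ref{pro:tau} and elementary facts already established in the paper), while the paper's is slightly slicker once the section formula is granted. One small slip in your write-up: stripping one, two, three letters from $a(0^\infty)=(1100)^\infty$ yields $b(0^\infty)$, $d(0^\infty)$, $c(0^\infty)$ in that order (since $a=1b(0^\infty)$, $b=1d(0^\infty)$, $d=0c(0^\infty)$), so the word ``respectively'' attached to the list $b,c,d$ is misordered --- but the displayed series $g_b=\tfrac{1+t^3}{1+t^4}$, $g_c=\tfrac{t+t^2}{1+t^4}$, $g_d=\tfrac{t^2+t^3}{1+t^4}$ are the correct ones and simplify to the stated fractions, so this is only a wording issue, not a gap.
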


\begin{proof}
By Proposition~\ref{pro:tau}, we obtain $a\in\Aff_{[[t]]}(X^*)$ with \[f=1+t+t^3+t^5+\cdots=1+t(1+t^2+t^4+\cdots)=1+\frac{t}{1+t^2}=\frac{t^2+t+1}{t^2+1},\]
and
\begin{multline*}
g=1+t+t^4+t^5+t^8+t^9+\cdots\\
=(1+t^4+t^8+\cdots)+t(1+t^4+t^8+\cdots)=\frac{1+t}{1+t^4}=\frac1{(t+1)^3},
\end{multline*}
where for simplification in the last step we used the fact that calculations are performed in $\Z_2$.
By Corollary 2.6 in~\cite{savchuk_s:affine_automata}, we can find the section of an affine automorphism $\tau_{f,g}$ at a vertex $x\in X$ via the formula
$$\tau_{f,g}|_x=\tau_{f,x\sigma(f)+\sigma(g)},$$
where $\sigma(c_0+c_1t+c_2t^2+\cdots)=c_1+c_2t+c_3t^2+\cdots$ for every formal power series $c_0+c_1t+c_2t^2+\cdots$. Using this formula, we can find the transformations inducing the automorphisms $b,c$ and $d$, where $b=a|_0$, $d=a|_1$ and $c=d|_0$.
\end{proof}

By Lemma~\ref{lem:norm}, conjugates of any spherically homogeneous element $s\in\SHAut(X^*)$ by powers (possibly negative) of $a$ are also spherically homogeneous, and hence commute. Therefore the following notation is well-defined for any $i_j\in\Z$:
$$s^{a^{i_1}+a^{i_2}+\cdots+a^{i_n}}:=s^{a^{i_1}}s^{a^{i_2}}\cdots s^{a^{i_n}}.$$
In particular, for each Laurent polynomial $p(a)\in\Z_2[a,a^{-1}]$ the elements $x^{p(a)}$ and $z^{p(a)}$ are defined. To show that $\langle x,z,a\rangle$ is isomorphic to $\Z_2^2\wr\Z$ it suffices to show that for each pair of Laurent polynomials $p(a)$ and $q(a)$ not both trivial, the element $x^{p(a)}z^{q(a)}$ of $\G$ is nontrivial. Actually, if it happens that for a pair of Laurent polynomials $p(a)$ and $q(a)$ we have $x^{p(a)}z^{q(a)}=1$, conjugating the last equation by a large enough power of $a$ gives $x^{\tilde{p}(a)}z^{\tilde{q}(a)}=1$ for some $\tilde{p},\tilde{q}\in \Z_2[a]$. Therefore, it is enough to show that for each pair of polynomials $p,q\in \Z_2[a]$ not both trivial $x^{p(a)}z^{q(a)}$ is a nontrivial element of $\G$. The proof of this fact will be based on the following lemmas.

\begin{lemma}
\label{lem:conj}
For any pair of polynomials $p,q\in \Z_2[a]$, we have
\[\begin{array}{l}
((x^{p(a)}z^{q(a)})^{(1)})^a=(x^{p(a)a}z^{q(a)a})^{(1)},\\
((x^{p(a)}z^{q(a)})^{(1)}\sigma)^a=(x^{p(a)a+a}z^{q(a)a+a})^{(1)}\sigma.\\
\end{array}
\]
\begin{proof}
We can write $a=(x^{-1},z^{-1})(a,a)\sigma=(x,z)(a,a)\sigma$ and $a^{-1}=(z^a,x^a)(a^{-1},a^{-1})\sigma$. Then using the fact that conjugates of $x$ and $z$ by powers of $a$ commute and that $x^2=z^2=1$, we obtain
$$((x^{p(a)}z^{q(a)})^{(1)})^a=(z^a,x^a)(a^{-1},a^{-1})\sigma(x^{p(a)}z^{q(a)})^{(1)}(x,z)(a,a)\sigma=$$$$(a^{-1}zx^{p(a)}z^{q(a)}za,a^{-1}xx^{p(a)}z^{q(a)}xa)=(a^{-1}x^{p(a)}z^{q(a)}a)^{(1)}=(x^{p(a)a}z^{q(a)a})^{(1)}$$ and
$$((x^{p(a)}z^{q(a)})^{(1)}\sigma)^a=(z^a,x^a)(a^{-1},a^{-1})\sigma(x^{p(a)}z^{q(a)})^{(1)}\sigma(x,z)(a,a)\sigma=$$$$(a^{-1}zx^{p(a)}z^{q(a)}xa,a^{-1}xx^{p(a)}z^{q(a)}za)\sigma=(a^{-1}x^{p(a)+1}z^{q(a)+1}a)^{(1)}\sigma=(x^{p(a)a+a}z^{q(a)a+a})^{(1)}\sigma$$
\end{proof}
\end{lemma}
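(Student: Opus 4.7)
The plan is to carry out a direct wreath-recursion calculation for both equalities. First I would rewrite $a$ and $a^{-1}$ in a form that is convenient for conjugating elements that sit on the first level. From the original recursion $a=(b,d)\sigma$ together with $x=ab^{-1}$, $z=ad^{-1}$, and the facts $x^2=z^2=1$, one gets $b=xa$ and $d=za$, which gives the decomposition $a=(x,z)(a,a)\sigma$. Applying the wreath-product inversion formula from the preliminaries (or using $x^{-1}=x$, $z^{-1}=z$) yields $a^{-1}=(a^{-1}z,a^{-1}x)\sigma=(z^a,x^a)(a^{-1},a^{-1})\sigma$, which is the shape that makes the conjugation collapse nicely.

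For the first identity I would plug these expressions into
\[
\bigl((x^{p(a)}z^{q(a)})^{(1)}\bigr)^a=a^{-1}\cdot(x^{p(a)}z^{q(a)},\,x^{p(a)}z^{q(a)})\cdot a
\]
and multiply out in the wreath product, using $\sigma\sigma=1$. The outer factors $(z^a,x^a)(a^{-1},a^{-1})$ on the left and $(a,a)(x,z)$ on the right (after passing one $\sigma$ through the middle tuple) sandwich each coordinate with a $z$ on the left and a $z$ on the right in coordinate $1$, and with an $x$ on both sides in coordinate $2$. Since $x^2=z^2=1$ and, by Lemma~\ref{lem:norm}, all conjugates of $x,z$ by powers of $a$ commute with one another, these flanking elements cancel, leaving each coordinate equal to $a^{-1}x^{p(a)}z^{q(a)}a=x^{p(a)a}z^{q(a)a}$, which is precisely the claim.

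For the second identity the setup is the same except that an extra $\sigma$ is inserted between the two middle coordinates of the element being conjugated. When one commutes that $\sigma$ past the right-hand tuple $(x,z)$, it swaps it to $(z,x)$. Consequently coordinate $1$ is flanked by $z$ on the left and $x$ on the right (instead of $z$ on both sides), and coordinate $2$ by $x$ on the left and $z$ on the right. Compared to the previous case each coordinate picks up an extra factor of $xz$, which, using $x^2=z^2=1$ and the commutativity mentioned above, shifts $p(a)\mapsto p(a)+1$ and $q(a)\mapsto q(a)+1$ before the conjugation by $a$ is absorbed. After the conjugation this gives the exponents $p(a)a+a$ and $q(a)a+a$ claimed, and the leftover $\sigma$ at the end is exactly the one that appears on the right-hand side of the second identity.

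The main obstacle is purely bookkeeping: one must keep scrupulous track of the order in the wreath product convention fixed in the preliminaries, commute the $\sigma$'s past tuples correctly, and invoke at the right moments the two structural facts — that $x,z$ and all their $a$-conjugates commute with each other and are involutions — to collapse the flanking factors. Once these conventions are pinned down, both equalities fall out by a single line of wreath-product multiplication in each case.
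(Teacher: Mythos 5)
Your proposal is correct and follows essentially the same route as the paper: the same decompositions $a=(x,z)(a,a)\sigma$ and $a^{-1}=(z^a,x^a)(a^{-1},a^{-1})\sigma$, followed by the same wreath-product multiplication, with the involutivity and mutual commutativity of the $a$-conjugates of $x,z$ collapsing the flanking factors exactly as in the paper's computation.
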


\begin{lemma}
\label{lem:powers}
For each $n\geq0$, we have $x^{a^n}=(x^{a^n}z^{a^n})^{(1)}$ and
$$
z^{a^n} = \left\{
        \begin{array}{ll}
            (x)^{(1)}\sigma, & \quad n=0, \\
            (z^a)^{(1)}\sigma, & \quad n=1, \\
            (x^{a+a^2+\cdots+a^{n-1}}z^{a+a^2+\cdots+a^n})^{(1)}\sigma, & \quad n>1.
        \end{array}
    \right.
$$
\begin{proof}
For $n=0$ we have $x=x^{a^0}=(y)^{(1)}=(xz)^{(1)}$ and $z=z^{a^0}=(x)^{(1)}\sigma$. Using induction on $n$ from Lemma~\ref{lem:conj}, we immediately reach the statement of the lemma.
\end{proof}
\end{lemma}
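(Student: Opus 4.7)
The plan is to prove both identities in parallel by induction on $n$, using Lemma~\ref{lem:conj} as the workhorse that converts a wreath decomposition at one step into the wreath decomposition at the next. The only nonobvious input is the base case, which is just the definitions of $x$ and $z$ recorded immediately before Lemma~\ref{lem:norm}.

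For $n=0$, we already know $x = y^{(1)} = (xz)^{(1)} = (x^{a^0}z^{a^0})^{(1)}$ and $z = (x)^{(1)}\sigma$, which is the first branch of the case split. The inductive step for the $x$ formula is then immediate: assuming $x^{a^n} = (x^{a^n}z^{a^n})^{(1)}$, I would conjugate both sides by $a$ and apply part~(1) of Lemma~\ref{lem:conj} with $p(a) = q(a) = a^n$, which gives $x^{a^{n+1}} = (x^{a^{n+1}}z^{a^{n+1}})^{(1)}$.

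For the $z$ formula, the case $n=1$ comes from applying part~(2) of Lemma~\ref{lem:conj} to $z = (x)^{(1)}\sigma$ with $p(a) = 1$, $q(a) = 0$: the resulting exponent of $x$ is $1\cdot a + a = 0$ in $\Z_2[a]$ and the exponent of $z$ is $0\cdot a + a = a$, yielding $z^a = (z^a)^{(1)}\sigma$. For $n \geq 2$, assuming $z^{a^n} = (x^{p_n(a)}z^{q_n(a)})^{(1)}\sigma$ with $p_n(a) = a + a^2 + \cdots + a^{n-1}$ and $q_n(a) = a + a^2 + \cdots + a^n$, a second application of part~(2) of Lemma~\ref{lem:conj} produces $z^{a^{n+1}} = (x^{p_n(a)\cdot a + a} z^{q_n(a)\cdot a + a})^{(1)}\sigma$, and the two new exponents telescope,
\[
p_n(a)\cdot a + a = a + a^2 + \cdots + a^n = p_{n+1}(a), \qquad q_n(a)\cdot a + a = a + a^2 + \cdots + a^{n+1} = q_{n+1}(a),
\]
closing the induction.

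Since Lemma~\ref{lem:conj} already packages the interaction of conjugation by $a$ with the wreath recursion in characteristic~$2$, there is no real obstacle. The only thing requiring care is the characteristic-$2$ bookkeeping, which is precisely what forces the three-branch case split: at $n=0$ the generator $z$ has no $z$-factor in its wreath decomposition, and at $n=1$ the expected $x$-contribution collapses because $a + a = 0$; for $n \geq 2$ everything stabilizes into the clean telescoping form above.
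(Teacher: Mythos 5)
Your proof is correct and follows essentially the same route as the paper: the base case is read off from the wreath decompositions $x=(xz)^{(1)}$ and $z=(x)^{(1)}\sigma$, and the inductive step is a direct application of Lemma~\ref{lem:conj}. The paper leaves the induction implicit ("using induction on $n$ from Lemma~\ref{lem:conj}, we immediately reach the statement"), whereas you spell out the telescoping of the exponents and the characteristic-$2$ cancellation $a+a=0$ that explains the separate $n=0$ and $n=1$ branches; all of these computations check out.
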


Let us define
$$
\phi_n(a) = \left\{
        \begin{array}{ll}
            0 & \quad n=0 \\
            a+a^2+\cdots+a^n & \quad n>0
        \end{array}
    \right.
$$
For each polynomial $q(a)=\sum_{i=0}^nc_ia^i\in\Z_2[a]$ we define
\[\psi_q(a)=\sum_{i=1}^nc_i\phi_{i-1}(a).\]

\begin{lemma}
\label{lem:properties}
The functions $\phi_n$ and $\psi_q$ have the following properties:
\begin{itemize}
\item[(i)] For each $n\geq1$, we have $\phi_n(a)=a\phi_{n-1}(a)+a$.
\item[(ii)] If $\deg{q}\leq1$, then $\psi_q=0$.
\item[(iii)] If $\deg{q}\geq 2$, then $\deg{\psi_q}=\deg q-1$.
\item[(iv)] The function $\psi_q$ is linear in $q$.

\end{itemize}
\begin{proof}
The proof is straightforward and we leave it to the reader as an easy exercise.
\end{proof}
\end{lemma}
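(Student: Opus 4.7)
The plan is to verify each of (i)--(iv) directly from the definitions of $\phi_n$ and $\psi_q$, since the lemma is purely a bookkeeping statement about polynomials in $\Z_2[a]$ and involves no group-theoretic content beyond what has already been assembled.

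For (i), I would expand $\phi_n(a) = a + a^2 + \cdots + a^n$ and factor an $a$ out of the tail: $a^2 + \cdots + a^n = a(a + \cdots + a^{n-1}) = a\phi_{n-1}(a)$, which gives $\phi_n(a) = a + a\phi_{n-1}(a)$ for $n \geq 2$. The base case $n = 1$ is handled by the convention $\phi_0 = 0$, under which $a\phi_0(a) + a = a = \phi_1(a)$.

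For (ii)--(iv), I would work from $\psi_q(a) = \sum_{i=1}^n c_i \phi_{i-1}(a)$ where $q(a) = \sum_{i=0}^n c_i a^i$. Part (ii) is immediate: if $\deg q \leq 1$, the only potentially nonzero summand is $c_1 \phi_0 = 0$, so $\psi_q = 0$. Part (iv) is also immediate from the $\Z_2$-linearity of the assignment $a^i \mapsto \phi_{i-1}(a)$, from which $\psi_{q_1+q_2} = \psi_{q_1} + \psi_{q_2}$. For (iii), assume $\deg q = n \geq 2$, so $c_n = 1$ over $\Z_2$; then $c_n \phi_{n-1}(a)$ has degree exactly $n-1$, and every other summand $c_i \phi_{i-1}(a)$ with $i \leq n-1$ has degree at most $n-2$, so the leading monomial $a^{n-1}$ cannot be cancelled and $\deg \psi_q = n - 1 = \deg q - 1$.

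There is no genuine obstacle here; the lemma is a routine verification, matching the author's description of it as an easy exercise. The only points requiring a touch of care are the edge-case conventions $\phi_0 = 0$ and the empty sum for constant $q$, together with the observation that over $\Z_2$ the leading coefficient of a polynomial of degree $n$ is automatically $1$, so no cancellation can occur in the leading term of $\psi_q$ and (iii) is genuinely an equality of degrees rather than an inequality.
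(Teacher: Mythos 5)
Your verification is correct and is precisely the routine check the authors had in mind when they left the lemma as an exercise: each part follows directly from the definitions of $\phi_n$ and $\psi_q$, and you have handled the relevant edge cases (the convention $\phi_0=0$, the empty sum for constant $q$, and the fact that over $\Z_2$ the leading coefficient is $1$ so the degree-$(n-1)$ term of $\psi_q$ cannot cancel). Nothing further is needed.
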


\begin{lemma}
\label{lem:pq}
For each pair of polynomials $p,q\in \Z_2[a]$, the section of $x^{p(a)}z^{q(a)}$ at each vertex of the first level is $x^{p(a)+\psi_q(a)+q(0)}z^{p(a)+a\psi_q(a)+aq(1)+aq(0)}$.
\begin{proof}
Assume $q(a)=\sum_{i=0}^nc_ia^i$. Then using the definition of the function $\psi_q$ together with Lemma~\ref{lem:powers}, we obtain the section of $z^{q(a)}$ at each vertex of the first level (say $z^{q(a)}|_0$) as
$$
z^{q(a)}|_0 = \left\{
        \begin{array}{ll}
            x^{c_1\phi_0(a)+c_2\phi_1(a)+\cdots+c_n\phi_{n-1}(a)}z^{c_1\phi_1(a)+c_2\phi_2(a)+\cdots+c_n\phi_n(a)}, & \quad c_0=0, \\
            x^{1+c_1\phi_0(a)+c_2\phi_1(a)+\cdots+c_n\phi_{n-1}(a)}z^{c_1\phi_1(a)+c_2\phi_2(a)+\cdots+c_n\phi_n(a)}, & \quad c_0=1
        \end{array}
    \right.
$$
$$
= \left\{
        \begin{array}{ll}
            x^{\psi_q(a)}z^{a\psi_q(a)+a(c_1+c_2+\cdots+c_n)}, & \quad c_0=0, \\
            x^{1+\psi_q(a)}z^{a\psi_q(a)+a(c_1+c_2+\cdots+c_n)}, & \quad c_0=1,
        \end{array}
    \right.
$$
$$=x^{q(0)+\psi_q(a)}z^{a\psi_q(a)+a(q(1)-q(0))}=x^{q(0)+\psi_q(a)}z^{a\psi_q(a)+aq(1)+aq(0)},$$
where we have used Lemma~\ref{lem:properties}(i) and the fact that $-1=1$ in $\Z_2$. It is obvious from Lemma~\ref{lem:powers} that $x^{p(a)}|_0=x^{p(a)}z^{p(a)}$. Now the statement of the lemma follows immediately from the fact that $x^{p(a)}z^{q(a)}$ is spherically homogeneous.
\end{proof}
\end{lemma}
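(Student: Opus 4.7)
The strategy is to reduce the computation to the already-known sections of the elementary conjugates $x^{a^i}, z^{a^j}$, exploiting the elementary abelian structure of $\SHAut(\{0,1\}^*)$. Writing $p(a)=\sum_i b_i a^i$ and $q(a)=\sum_i c_i a^i$, the element $x^{p(a)}z^{q(a)}$ is the product
\[
\prod_{i}(x^{a^i})^{b_i}\cdot \prod_{j}(z^{a^j})^{c_j},
\]
and by Lemma~\ref{lem:norm} every one of these factors is spherically homogeneous, hence they commute pairwise and each has order dividing $2$. A spherically homogeneous element $g=(h,h)\sigma^{\varepsilon}$ satisfies $g|_0=g|_1=h$, so for a product of such elements the section at vertex~$0$ is simply the product of the individual sections at $0$ (the permutation components accumulate independently, but do not affect where the active vertex lands because both first-level sections of each factor coincide). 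This reduces the proof to a multiplicative computation inside the abelian group $\SHAut(\{0,1\}^*)$.

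Substituting the formulas from Lemma~\ref{lem:powers} --- namely $x^{a^i}|_0=x^{a^i}z^{a^i}$ for all $i\geq 0$, together with $z|_0=x$, $z^a|_0=z^a$, and $z^{a^n}|_0=x^{\phi_{n-1}(a)}z^{\phi_n(a)}$ for $n\geq 2$ --- gives
\[
\bigl(x^{p(a)}z^{q(a)}\bigr)\big|_0 \;=\; x^{p(a)}z^{p(a)}\cdot x^{c_0}z^{c_1 a}\cdot \prod_{i\geq 2}x^{c_i\phi_{i-1}(a)}z^{c_i\phi_i(a)}.
\]
Collecting the $x$-exponents yields $p(a)+c_0+\sum_{i\geq 1}c_i\phi_{i-1}(a)$, which by the definition of $\psi_q$ (together with $\phi_0=0$) equals $p(a)+q(0)+\psi_q(a)$, matching the claimed first exponent.

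For the $z$-exponent I would apply the recurrence $\phi_i(a)=a\phi_{i-1}(a)+a$ from Lemma~\ref{lem:properties}(i) to compute
\[
c_1 a + \sum_{i\geq 2}c_i\phi_i(a) \;=\; a\psi_q(a) + a\sum_{i\geq 1}c_i,
\]
and since $\sum_{i\geq 1}c_i = q(1)-q(0) = q(1)+q(0)$ in $\Z_2$, the total $z$-exponent becomes $p(a)+a\psi_q(a)+aq(1)+aq(0)$, as desired. The main obstacle I anticipate is purely bookkeeping: Lemma~\ref{lem:powers} splits into three regimes ($n=0$, $n=1$, $n\geq 2$), so I need to track which terms of $q(a)$ feed into the $\psi_q(a)$ summation and which only contribute to the constant and linear pieces of the $x$- and $z$-exponents. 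The spherical-homogeneity argument above is what neutralizes the otherwise awkward issue of propagating the active vertex through the root permutations carried by each $z^{a^j}$.
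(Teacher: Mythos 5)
Your proposal is correct and follows essentially the same route as the paper: decompose $x^{p(a)}z^{q(a)}$ into the elementary factors $x^{a^i}$, $z^{a^j}$, take sections termwise using Lemma~\ref{lem:powers} (justified because each factor is spherically homogeneous, so its two first-level sections coincide and the root permutations cause no bookkeeping issues), and collect exponents via the recurrence $\phi_i=a\phi_{i-1}+a$. Your write-up is slightly more explicit than the paper's about why the section of the product is the product of the sections, but the decomposition, the key lemma, and the exponent arithmetic are identical.
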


\begin{remark}
\label{rem:odd-even}
Since $z$ acts nontrivially on the first level, it is clear that the action of $z^{q(a)}$ on the first level is trivial if and only if $q(a)$ has an even number of terms which, in the case of $\Z_2$, is equivalent to $q(1)=0$. In other words, $z^{q(a)}=(x^{q(0)+\psi_q(a)}z^{a\psi_q(a)+aq(0)})^{(1)}$ if $q(1)=0$ and $z^{q(a)}=(x^{q(0)+\psi_q(a)}z^{a\psi_q(a)+a+aq(0)})^{(1)}\sigma$ if $q(a)=1$.
\end{remark}

To simplify our notation, we will denote $x^{p(a)}z^{q(a)}$ by $(p,q)$ for each pair of polynomials $p,q\in\Z_2[a]$. To say that the section of $x^{p(a)}z^{q(a)}$ at each vertex of the first level is $x^{p'(a)}z^{q'(a)}$, we use the notation $(p,q)\rightarrow(p',q')$. According to Lemma~\ref{lem:pq}, we have $p'=p+\psi_q+q(0)$ and $q'=p+a\psi_q+aq(1)+aq(0)$.

Before we prove our main theorem, we need to introduce a remark and a lemma.

\begin{remark}
\label{rem:lower-deg}
The leading coefficient of any nonzero polynomial in $\Z_2[a]$ is $1$. So when we add two polynomials of the same degree, say $n$, the degree of the sum is less than $n$.
\end{remark}

\begin{lemma}
\label{lem:degree-sum}
Let $q(a)$ be a polynomial in $\Z_2[a]$ with degree at least $2$. Then $\deg{(\psi_q+\psi_{a\psi_q})}=\deg{q}-2$.
\begin{proof}
Let $q(a)=c_0+c_1a+\cdots+c_na^n$ with $n\geq2$ and $c_n\neq0$. Then
$$\psi_q(a)=a(c_2+c_3+\cdots+c_n)+a^2(c_3+c_4+\cdots+c_n)+\cdots+a^{n-2}(c_{n-1}+c_{n})+a^{n-1}c_n.$$
Hence
$$a\psi_q(a)=a^2(c_2+c_3+\cdots+c_n)+a^3(c_3+c_4+\cdots+c_n)+\cdots+a^{n-1}(c_{n-1}+c_{n})+a^nc_n.$$
So
$$\psi_{a\psi_q}(a)=a(c_2+2c_3+3c_4+\cdots+(n-1)c_n)+a^2(c_3+2c_4+3c_5+\cdots+(n-2)c_n)+\cdots$$$$+a^{n-2}(c_{n-1}+2c_n)+a^{n-1}c_n.$$
We finally obtain
$$\psi_q(a)+\psi_{a\psi_q}(a)=a(2c_2+3c_3+\cdots+nc_n)+a^2(2c_3+3c_4+\cdots+(n-1)c_n)+\cdots$$$$+a^{n-2}(2c_{n-1}+3c_n)+a^{n-1}(2c_n)$$$$=a(2c_2+3c_3+\cdots+nc_n)+a^2(2c_3+3c_4+\cdots+(n-1)c_n)+\cdots+a^{n-2}c_n.$$
Therefore, $\deg{(\psi_q+\psi_{a\psi_q})}=n-2=\deg{q}-2$.
\end{proof}
\end{lemma}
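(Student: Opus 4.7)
My plan is to open up the operator $\psi$ by summing the $\phi_{i-1}$ and tracking coefficients carefully in $\Z_2$. Write $q(a)=\sum_{i=0}^n c_i a^i$ with $c_n=1$ and $n\geq 2$, and set
\[
C_j \;:=\; c_{j+1}+c_{j+2}+\cdots+c_n \qquad (0\leq j\leq n-1),
\]
so in particular $C_{n-1}=c_n=1$. Since $\phi_{i-1}(a)=a+a^2+\cdots+a^{i-1}$ for $i\geq 2$ and $\phi_0=0$, interchanging the order of summation gives
\[
\psi_q(a)=\sum_{i=2}^n c_i\sum_{j=1}^{i-1}a^j=\sum_{j=1}^{n-1}C_j\, a^j.
\]
So the coefficient of $a^j$ in $\psi_q$ is $C_j$ for $1\leq j\leq n-1$; this already recovers $\deg\psi_q=n-1$ from Lemma~\ref{lem:properties}(iii).

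Next I would apply the same machinery to $a\psi_q(a)=\sum_{j=2}^n C_{j-1}a^j$. Denoting its coefficient of $a^j$ by $d_j$ (so $d_j=C_{j-1}$ for $j\geq 2$ and $d_0=d_1=0$), the definition of $\psi$ and a second swap of summation order give
\[
\psi_{a\psi_q}(a)=\sum_{j=2}^n C_{j-1}\,\phi_{j-1}(a)=\sum_{k=1}^{n-1}\Bigl(\,\sum_{m=k}^{n-1}C_m\Bigr)a^k.
\]

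Finally I would add the two expansions. For $1\leq k\leq n-1$, the coefficient of $a^k$ in $\psi_q+\psi_{a\psi_q}$ is
\[
C_k+\sum_{m=k}^{n-1}C_m \;=\; \sum_{m=k+1}^{n-1}C_m \pmod 2,
\]
since the $m=k$ term doubles up and vanishes in $\Z_2$. Evaluating at $k=n-1$ gives an empty sum, hence coefficient $0$, while at $k=n-2$ the sum is $C_{n-1}=c_n=1\ne 0$. This is exactly what is needed: the degree $n-1$ coefficient of $\psi_q$ (which is $C_{n-1}$) is cancelled by its counterpart in $\psi_{a\psi_q}$ (per Remark~\ref{rem:lower-deg}, this is the expected loss of one degree on adding), while the next coefficient survives. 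Thus $\deg(\psi_q+\psi_{a\psi_q})=n-2=\deg q-2$.

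The only place that requires care is the bookkeeping of two nested ``tail'' sums and the correct range of $k$; no step beyond elementary $\Z_2$-linear algebra is involved. By Lemma~\ref{lem:properties}(iv) the operators $\psi$ and $a\psi$ are linear in $q$, so the result is also consistent with (and could alternatively be checked by) applying it monomial-by-monomial to $q(a)=a^i$ and summing; the direct coefficient computation above, however, is the cleanest route.
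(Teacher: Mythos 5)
Your proof is correct and follows essentially the same route as the paper's: expand $\psi_q$ and $\psi_{a\psi_q}$ explicitly, add, and observe that the top coefficient ($C_{n-1}=c_n$) cancels while the coefficient of $a^{n-2}$ survives as $c_n=1$. Your tail-sum notation $C_j$ and the summation interchanges are just a tidier bookkeeping of the same coefficient computation the paper writes out longhand.
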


We are now ready to prove our main theorem.

\begin{theorem}
\label{thm:main}
The group $\G=\langle a=(b,d)\sigma,b=(d,b)\sigma,c=(a,c),d=(c,a)\rangle$ is isomorphic to the rank $2$ lamplighter group $\Z_2^2\wr\Z$.
\begin{proof}
From the paragraph preceding Lemma~\ref{lem:conj}, we only need to show that for each pair of polynomials $p,q\in\Z_2[a]$ not both trivial the expression $x^{p(a)}z^{q(a)}$ is nontrivial. We will assume the theorem is incorrect and prove it by contradiction as follows. We pick two polynomials $p,q\in\Z_2[a]$ such that $x^{p(a)}z^{q(a)}$ is trivial with $\max{\{\deg{p},\deg{q}\}}$ minimal. We will find a pair of polynomials $\tilde{p},\tilde{q}\in\Z_2[a]$ such that $x^{\tilde{p}}z^{\tilde{q}}$ is trivial and $\max{\{\deg{\tilde{p}},\deg{\tilde{q}}\}}<\max{\{\deg{p},\deg{q}\}}$. To find these two polynomials, we use the fact that all the states of the trivial automorphism are trivial and so is the product of any two of them.

Using the notation introduced above, we start with a pair $(p,q)$ corresponding to the trivial element of $\G$ with $\deg{p}=m$ and $\deg{q}=n$ such that $\max{\{m,n\}}$ is minimal. So $(p,q)\rightarrow(p',q')$ where
\[p'=p+\psi_q+q(0)\quad \text{and}\quad q'=p+a\psi_q+aq(0)\]
(Note that $q(1)=0$ by Remark~\ref{rem:odd-even}). We consider four cases of what can happen with the degrees of polynomials and analyze the dynamics that arises when we compute the sections of corresponding elements.

\noindent \textbf{Case I.} $m>n$.\\
Using Lemma~\ref{lem:properties}(iii), we obtain $\deg{p'}=\deg{q'}=m$, which constitutes Case~II.

\noindent \textbf{Case II.} $m=n$.\\
If $m=n\leq1$, then $\psi_q=0$. In such a case, we have only six values of the expression $x^{p(a)}z^{q(a)}$ to consider, namely $x^{a+1},x^a,x,x^{a+1}z^{a+1},x^az^{a+1}$ and $xz^{a+1}$, which are all nontrivial (keep in mind that $q(1)=0$). So we will assume in Case II that $m\geq2$ (and also in Case I since we have checked all possible occurrences). Hence by Lemma~\ref{lem:properties}(iii) and Remark~\ref{rem:lower-deg}, we have $\deg{p'}=m$ and $\deg{q'}<m$. Therefore we get back to Case~I (which does not yet finish the proof, of course).

\noindent \textbf{Case III.} $m=n-1$.\\
If $n=1$ and $m=0$, we have only two values of the expression $x^{p(a)}z^{q(a)}$ to consider, namely $z^{a+1}$ and $xz^{a+1}$ which are both nontrivial. So we will assume in Case III that $n\geq2$. Hence again by Lemma~\ref{lem:properties}(iii) and Remark~\ref{rem:lower-deg}, we have $\deg{p'}<n-1$ and $\deg{q'}=n$, thus bringing us to Case~IV.

\noindent \textbf{Case IV.} $m<n-1$.\\
Here we always have $n\geq2$. We obtain $\deg{p'}=n-1$ and $\deg{q'}=n$. Which again brings us to Case~III.

Let $(p'',q'')$ be the state of $(p,q)$ at any vertex of the second level (they are all equal), i.e. $(p,q)\rightarrow(p',q')\rightarrow(p'',q'')$. We claim that in case I and Case IV above, $(p+p'',q+q'')$ is trivial (this is immediate) with $\max{\{\deg{(p+p'')},\deg{(q+q'')}\}}<\max{\{\deg{p},\deg{q}\}}$ and the two polynomials $p+p''$ and $q+q''$ are not both trivial (we will show only that $p+p''$ is nontrivial). Since Case I leads to Case II in the first level and vice versa, and the same thing happens with Case III and Case IV, it is enough to consider only Case I and Case IV.

The polynomial $p''$ can be easily computed using Lemma~\ref{lem:properties}(ii),(iv) and it is equal to
\begin{multline*}p''=p'+\psi_{q'}+q'(0)=(p+\psi_q+q(0))+\psi_{p+a\psi_q+aq(0)}+p(0)\\
=(p+\psi_q+q(0))+\psi_p+\psi_{a\psi_q}+p(0)=p+\psi_q+\psi_p+\psi_{a\psi_q}+q(0)+p(0)
\end{multline*}
and thus
\[p+p''=\psi_q+\psi_p+\psi_{a\psi_q}+q(0)+p(0).\]

In Case I, we have $\deg{q},\deg{q''}<m$ so $\deg{(q+q'')}<m$. By Lemma~\ref{lem:properties}(iii), $\deg{(p+p'')}=m-1<m$. Since $m\geq2$, the polynomial $p+p''$ is nontrivial.

In Case IV, $\deg{q}=\deg{q''}=n$. Hence by Remark~\ref{rem:lower-deg}, $\deg{(q+q'')}<n$. By Lemma~\ref{lem:degree-sum}, $\deg{(p+p'')}=n-2<n$. For $n\geq3$, the polynomial $p+p''$ is nontrivial. We still have to check the case when $n=2$ and $m=0$ (keeping in mind that $q(1)=0$). There are four values of the expression $x^{p(a)}z^{q(a)}$ to consider, namely $x^{a^2},x^{a^2+a},x^{a^2+1},x^{a^2+a+1}$, which are all nontrivial. The proof is now complete.
\end{proof}
\end{theorem}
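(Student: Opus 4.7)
The plan is to establish $\G\cong\Z_2^2\wr\Z$ by constructing a natural surjection and then proving it is injective via a degree-reduction argument. Since $x,z\in\SHAut(X^*)$ and $a$ normalizes $\SHAut(X^*)$ by Lemma~\ref{lem:norm}, the subgroup $B\le\G$ generated by $\{x^{a^n},z^{a^n}\mid n\in\Z\}$ is an elementary abelian $2$-group preserved by conjugation by $a$; since also $\G=\langle x,z,a\rangle$, this produces a surjection $\Z_2^2\wr\Z\twoheadrightarrow\G$ sending the two $\Z_2$ factors to $x,z$ and the $\Z$ to $a$. The injectivity of this surjection is equivalent to the assertion that $x^{p(a)}z^{q(a)}\ne 1$ for every nonzero $(p,q)\in\Z_2[a,a^{-1}]^2$, and conjugating by a sufficiently large power of $a$ lets me assume $p,q\in\Z_2[a]$.

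My plan for that nontriviality is a minimal-counterexample induction on $N:=\max\{\deg p,\deg q\}$. Assuming toward contradiction that $x^pz^q=1$ with $(p,q)\ne(0,0)$ and $N$ minimal, the first-level action of $z^q$ must be trivial (since $x^p$ is), so Remark~\ref{rem:odd-even} forces $q(1)=0$. I would then iterate Lemma~\ref{lem:pq} twice to obtain the second-level section $(p,q)\to(p',q')\to(p'',q'')$; since all sections of the identity are trivial, also $x^{p''}z^{q''}=1$, and therefore $x^{p+p''}z^{q+q''}=1$. Using the linearity of $\psi$ (Lemma~\ref{lem:properties}(iv)) together with its vanishing on degree-$\le 1$ polynomials (Lemma~\ref{lem:properties}(ii)), I would derive the clean identity
\[
p+p''=\psi_p+\psi_q+\psi_{a\psi_q}+p(0)+q(0),
\]
and the goal becomes to show $\max\{\deg(p+p''),\deg(q+q'')\}<N$ with the new pair nonzero, contradicting minimality.

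The case analysis splits on $\deg p$ versus $\deg q$ and is essentially two-fold, since one step of the section operation swaps these regimes. When $\deg p\ge\deg q$, Lemma~\ref{lem:properties}(iii) gives $\deg\psi_p=\deg p-1$, which dominates everything else in $p+p''$, yielding the desired degree drop by one. When $\deg p<\deg q$, however, $\psi_p$ is too small to dominate and one must rely on cancellation between $\psi_q$ and $\psi_{a\psi_q}$. I expect the main obstacle to be precisely this cancellation: both $\psi_q$ and $\psi_{a\psi_q}$ have degree $\deg q-1$, so to conclude I need the $a^{\deg q-1}$-coefficients to cancel exactly and the next coefficients to combine into something nonzero of degree $\deg q-2$. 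Writing out the coefficient sequences explicitly should show that both top coefficients equal the leading coefficient of $q$, which in characteristic $2$ forces cancellation; this is the content of Lemma~\ref{lem:degree-sum} and is where the argument becomes genuinely delicate.

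Finally, a handful of small-degree base cases (with $N\le 2$) cannot be reduced and must be checked by hand using Lemma~\ref{lem:powers} and the explicit formulas from Lemma~\ref{lem:pq}; there are only finitely many candidate pairs with $q(1)=0$ and small total degree, and each is verified nontrivial by direct inspection of its action on the first few levels of $X^*$. Combined with the induction step, this closes the contradiction and completes the proof.
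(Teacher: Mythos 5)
Your proposal follows essentially the same route as the paper's proof: the same reduction to nontriviality of $x^{p(a)}z^{q(a)}$ for $p,q\in\Z_2[a]$, the same minimal-counterexample argument on $\max\{\deg p,\deg q\}$ via the two-level section identity $p+p''=\psi_p+\psi_q+\psi_{a\psi_q}+p(0)+q(0)$, the same key cancellation lemma for $\psi_q+\psi_{a\psi_q}$, and the same finite list of small-degree base cases. The only detail to watch is the subcase $\deg p=\deg q-1$, where $\psi_p$ has the same degree $\deg q-2$ as $\psi_q+\psi_{a\psi_q}$ and could in principle spoil the nontriviality of $p+p''$; the paper dispatches this by first replacing the counterexample with its first-level section, which lands in the regime $\deg p<\deg q-1$.
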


\bibliographystyle{alpha}

\begin{thebibliography}{GLS{\.Z}00}

\bibitem[AS17]{ahmed_s:polynomial_ergodicity}
Elsayed Ahmed and Dmytro Savchuk.
\newblock Endomorphisms of regular rooted trees induced by the action of
  polynomials on the ring $\mathbb {Z}_d$ of $d$-adic integers.
\newblock Preprint: arxiv:1711.06735, 2017.

\bibitem[Ati76]{atiyah:elliptic}
M.~F. Atiyah.
\newblock Elliptic operators, discrete groups and von {N}eumann algebras.
\newblock In {\em Colloque ``Analyse et Topologie'' en l'Honneur de Henri
  Cartan (Orsay, 1974)}, pages 43--72. Ast\'erisque, No. 32--33. Soc. Math.
  France, Paris, 1976.

\bibitem[Aus13]{austin:irrational_dimension13}
Tim Austin.
\newblock Rational group ring elements with kernels having irrational
  dimension.
\newblock {\em Proc. Lond. Math. Soc. (3)}, 107(6):1424--1448, 2013.

\bibitem[BDR16]{bondarenko_dr:lamplighterZ3wrZ16}
Ievgen Bondarenko, Daniele D'Angeli, and Emanuele Rodaro.
\newblock The lamplighter group {$\Bbb{Z}_3\wr\Bbb{Z}$} generated by a
  bireversible automaton.
\newblock {\em Comm. Algebra}, 44(12):5257--5268, 2016.

\bibitem[BM17]{bartholdi_m:order_undecidable}
Laurent Bartholdi and Ivan Mitrofanov.
\newblock The word and order problems for self-similar and automata groups.
\newblock Preprint: arxiv:1710.10109, 2017.

\bibitem[B{\v{S}}06]{bartholdi_s:bsolitar}
Laurent~I. Bartholdi and Zoran {\v{S}}uni{\'k}.
\newblock Some solvable automaton groups.
\newblock In {\em Topological and Asymptotic Aspects of Group Theory}, volume
  394 of {\em Contemp. Math.}, pages 11--29. Amer. Math. Soc., Providence, RI,
  2006.

\bibitem[BTZ17]{brieussel_tz:random_walks_affine_group}
J\'er\'emie Brieussel, Ryokichi Tanaka, and Tianyi Zheng.
\newblock Random walks on the discrete affine group.
\newblock Preprint: arxiv:1703.07741, 2017.

\bibitem[Cap14]{caponi:thesis2014}
Louis Caponi.
\newblock {On Classification of Groups Generated by Automata with 4 States over
  a 2-Letter Alphabet}.
\newblock Master's thesis, University of South Florida, Department of
  Mathematics and Statistics, Tampa, FL, 33620, USA, 2014.

\bibitem[DS02]{dicks_s:spectral_measure02}
Warren Dicks and Thomas Schick.
\newblock The spectral measure of certain elements of the complex group ring of
  a wreath product.
\newblock {\em Geom. Dedicata}, 93:121--137, 2002.

\bibitem[Gil18]{gillibert:order_undecidable18}
Pierre Gillibert.
\newblock An automaton group with undecidable order and {E}ngel problems.
\newblock {\em J. Algebra}, 497:363--392, 2018.

\bibitem[GK14]{grigorchuk_k:subgroup_structure14}
R.~Grigorchuk and R.~Kravchenko.
\newblock On the lattice of subgroups of the lamplighter group.
\newblock {\em Internat. J. Algebra Comput.}, 24(6):837--877, 2014.

\bibitem[GLS{\.Z}00]{grigorch_lsz:atiyah}
Rostislav~I. Grigorchuk, Peter Linnell, Thomas Schick, and Andrzej {\.Z}uk.
\newblock On a question of {A}tiyah.
\newblock {\em C. R. Acad. Sci. Paris S\'er. I Math.}, 331(9):663--668, 2000.

\bibitem[GM05]{gl_mo:compl}
Yair Glasner and Shahar Mozes.
\newblock Automata and square complexes.
\newblock {\em Geom. Dedicata}, 111:43--64, 2005.
\newblock (available at \emph{http://arxiv.org/abs/math.GR/0306259}).

\bibitem[GNS00]{gns00:automata}
R.~I. Grigorchuk, V.~V. Nekrashevich, and V.~I. Sushchanski{\u\i}.
\newblock Automata, dynamical systems, and groups.
\newblock {\em Tr. Mat. Inst. Steklova}, 231(Din. Sist., Avtom. i Beskon.
  Gruppy):134--214, 2000.

\bibitem[Gra14]{grabowski:atiyah_problem14}
{\L}ukasz Grabowski.
\newblock On {T}uring dynamical systems and the {A}tiyah problem.
\newblock {\em Invent. Math.}, 198(1):27--69, 2014.

\bibitem[Gra16]{grabowski:irrational_l2_lamplighter16}
{\L}ukasz Grabowski.
\newblock Irrational {$l^2$} invariants arising from the lamplighter group.
\newblock {\em Groups Geom. Dyn.}, 10(2):795--817, 2016.

\bibitem[GS14]{grigorch_s:essfree}
Rostislav Grigorchuk and Dmytro Savchuk.
\newblock Self-similar groups acting essentially freely on the boundary of the
  binary rooted tree.
\newblock In {\em Group theory, combinatorics, and computing}, volume 611 of
  {\em Contemp. Math.}, pages 9--48. Amer. Math. Soc., Providence, RI, 2014.

\bibitem[G{\.Z}01]{grigorch_z:lamplighter}
Rostislav~I. Grigorchuk and Andrzej {\.Z}uk.
\newblock The lamplighter group as a group generated by a 2-state automaton,
  and its spectrum.
\newblock {\em Geom. Dedicata}, 87(1-3):209--244, 2001.

\bibitem[KmV83]{kaimanovich_v:random_walks83}
V.~A. Ka\u\i~manovich and A.~M. Vershik.
\newblock Random walks on discrete groups: boundary and entropy.
\newblock {\em Ann. Probab.}, 11(3):457--490, 1983.

\bibitem[KPS16]{klimann_ps:orbit_automata}
Ines Klimann, Matthieu Picantin, and Dmytro Savchuk.
\newblock Orbit automata as a new tool to attack the order problem in automaton
  groups.
\newblock {\em J. Algebra}, 445:433--457, 2016.

\bibitem[LPP96]{lyons_pp:random_walks_lamplighter96}
Russell Lyons, Robin Pemantle, and Yuval Peres.
\newblock Random walks on the lamplighter group.
\newblock {\em Ann. Probab.}, 24(4):1993--2006, 1996.

\bibitem[LW13]{lehner_w:lamplighter_atiyah13}
Franz Lehner and Stephan Wagner.
\newblock Free lamplighter groups and a question of {A}tiyah.
\newblock {\em Amer. J. Math.}, 135(3):835--849, 2013.

\bibitem[MS16]{muntyan_s:automgrp}
Y.~Muntyan and D.~Savchuk.
\newblock {\em {AutomGrp -- \verb+GAP+ package for computations in self-similar
  groups and semigroups, Version 1.3}}, 2016.
\newblock {A}ccepted \verb+GAP+ package (available at
  \emph{http://www.gap-system.org/Packages/automgrp.html}).

\bibitem[Nek05]{nekrash:self-similar}
Volodymyr Nekrashevych.
\newblock {\em Self-similar groups}, volume 117 of {\em Mathematical Surveys
  and Monographs}.
\newblock American Mathematical Society, Providence, RI, 2005.

\bibitem[NP11]{nekrashevych_p:scale_invariant}
Volodymyr Nekrashevych and G{\'a}bor Pete.
\newblock Scale-invariant groups.
\newblock {\em Groups Geom. Dyn.}, 5(1):139--167, 2011.

\bibitem[NS04]{nekrash_s:12endomorph}
V.~Nekrashevych and S.~Sidki.
\newblock Automorphisms of the binary tree: state-closed subgroups and dynamics
  of $1/2$-endomorphisms.
\newblock volume 311 of {\em London Math. Soc. Lect. Note Ser.}, pages
  375--404. {Cambridge Univ. Press}, 2004.

\bibitem[OS04]{olijnyk_s:free_prods_unitriangular04}
Andrij Olijnyk and Vitaly Sushchansky.
\newblock Representations of free products by infinite unitriangular matrices
  over finite fields.
\newblock {\em Internat. J. Algebra Comput.}, 14(5-6):741--749, 2004.
\newblock International Conference on Semigroups and Groups in honor of the
  65th birthday of Prof. John Rhodes.

\bibitem[PSC02]{pittet_s:random_walks02}
C.~Pittet and L.~Saloff-Coste.
\newblock On random walks on wreath products.
\newblock {\em Ann. Probab.}, 30(2):948--977, 2002.

\bibitem[SS05]{silva_s:lamplighter05}
P.~V. Silva and B.~Steinberg.
\newblock On a class of automata groups generalizing lamplighter groups.
\newblock {\em Internat. J. Algebra Comput.}, 15(5-6):1213--1234, 2005.

\bibitem[SS16]{savchuk_s:affine_automata}
Dmytro~M. Savchuk and Said~N. Sidki.
\newblock Affine automorphisms of rooted trees.
\newblock {\em Geom. Dedicata}, 183:195--213, 2016.

\bibitem[SV11]{savchuk_v:free_prods}
Dmytro Savchuk and Yaroslav Vorobets.
\newblock Automata generating free products of groups of order 2.
\newblock {\em J. Algebra}, 336(1):53--66, 2011.

\bibitem[SVV11]{steinberg_vv:series_free}
Benjamin Steinberg, Mariya Vorobets, and Yaroslav Vorobets.
\newblock Automata over a binary alphabet generating free groups of even rank.
\newblock {\em Internat. J. Algebra Comput.}, 21(1-2):329--354, 2011.

\bibitem[VV07]{vorobets:aleshin}
Mariya Vorobets and Yaroslav Vorobets.
\newblock On a free group of transformations defined by an automaton.
\newblock {\em Geom. Dedicata}, 124:237--249, 2007.

\bibitem[VV10]{vorobets:series_free}
Mariya Vorobets and Yaroslav Vorobets.
\newblock On a series of finite automata defining free transformation groups.
\newblock {\em Groups Geom. Dyn.}, 4(2):377--405, 2010.

\end{thebibliography}

\def\cprime{$'$}

\end{document}